\newcommand{\norm}[1]{\left\| #1 \right\|}
\newcommand{\khlsum}[1]{\sum_{K_{h}^{l} \in \mathcal{T}_{h}^{l}}\int_{K_{h}^{l}} #1\ \mbox{d}\sigma }
\newcommand{\khsum}[1]{\sum_{K_{h} \in \mathcal{T}_{h}}\int_{K_{h}} #1\ \mbox{d}\sigma_{h} }
\newcommand{\ehlsum}[1]{\sum_{e_{h}^{l} \in \mathcal{E}_{h}^{l}}\int_{e_{h}^{l}} #1\ \mbox{d}s }
\begin{document}

\title{Adaptive discontinuous Galerkin methods on surfaces}
\shorttitle{Adaptive DG methods on surfaces}

\author{%
{\sc
Andreas Dedner {\sc and}
Pravin Madhavan\thanks{Corresponding author. Email: p.madhavan@warwick.ac.uk}} \\
Mathematics Institute and Centre for Scientific Computing, University of Warwick,\\
Coventry CV4 7AL, UK
}
\shortauthorlist{Dedner \& Madhavan}

\maketitle

\begin{abstract}
{
We present a dual weighted residual-based a posteriori error estimate for a
discontinuous Galerkin (DG) approximation of a linear second-order elliptic
problem on compact smooth connected and oriented surfaces in
$\mathbb{R}^{3}$ which are implicitly represented as level sets of a smooth
function. We show that the error in the energy norm may be split into a ``residual part'' and a higher order ``geometric part''. Upper and lower bounds
for the resulting a posteriori error estimator are proven
and we consider a number of challenging test problems to demonstrate the
reliability and efficiency of the estimator. We also present a novel
``geometric'' driven refinement strategy for PDEs on surfaces which
considerably improves the performance of the method on complex surfaces.
}
{discontinuous galerkin; interior penalty; a posteriori error estimation; adaptive refinement; surface PDEs.}
\end{abstract}

\section{Introduction}
\label{sec:Introduction}
Partial differential equations (PDEs) on manifolds have become an active area of research in recent years due to the fact that, in many applications, models have to be formulated not on a flat Euclidean domain but on a curved surface. For example, they arise naturally in fluid dynamics (e.g.~surface active agents on the interface between two fluids, \cite{JamLow04}) and material science (e.g.~diffusion of species along grain boundaries, \cite{DecEllSty01}) but have also emerged in areas as diverse as image processing and cell biology (e.g.~cell motility involving processes on the cell membrane, \cite{neilson2011modelling} or phase separation on biomembranes, \cite{EllSti10}). 

Finite element methods (FEM) for elliptic problems and their error analysis have been successfully applied to problems on surfaces via the intrinsic approach in \cite{dziuk1988finite} based on interpolating the surface by a triangulated one. This approach has subsequently been extended to parabolic problems in \cite{dziuk2007surface} as well as evolving surfaces in \cite{dziuk2007finite}. The literature on the application of FEM to various surface PDEs and geometric flows is now quite extensive, reviews of which can be found in \cite{dziuk2013finite} and \cite{deckelnick2005computation}. However, there are a number of situations where FEM may not be the appropriate numerical method, for instance, advection dominated problems which lead to steep gradients or even discontinuities in the solution. 

DG methods are a class of numerical methods that have been successfully applied to hyperbolic, elliptic and parabolic PDEs arising from a wide range of applications. Some of its main advantages compared to `standard' finite element methods include the ability of capturing discontinuities as arising in advection dominated problems, and less restriction on grid structure as well as on the choice of basis functions, which make them ideal for a posteriori error estimation and hp-adaptive refinement. The main idea of DG methods is not to require continuity of the solution between elements. Instead, inter-element behaviour has to be prescribed carefully in such a way that the resulting scheme has adequate consistency, stability and accuracy properties. A short introduction to DG methods for both ODEs and PDEs is given in \cite{cockburn2003discontinuous}. A history of the development of DG methods can be found in \cite{cockburn2000development} and \cite{arnold2002unified}. \cite{arnold2002unified} provides an in-depth analysis of a large class of discontinuous Galerkin methods for second-order elliptic problems. 

DG methods have first been extended to surfaces in \cite{dedner2012analysis}, where an interior penalty (IP) method for a linear second-order elliptic problem was introduced and optimal a priori error estimates in the $L^{2}$ and $DG$ norms for piecewise linear ansatz functions and surface approximations were derived. \cite{larsson2013continuous} have considered a continuous/discontinuous Galerkin method for a fourth order elliptic PDE on surfaces. \cite{ju2009finite}, \cite{lenz2011convergent} and \cite{GieMue_prep} have also derived a priori error bounds for finite volume methods on (evolving) surfaces via the intrinsic approach.

The literature on a posteriori error estimation and adaptivity on surfaces is significantly less extensive than its a priori counterpart. \cite{demlow2008adaptive} derived an a posteriori error estimator for the finite element discretisation of the Laplace-Beltrami operator on surfaces, showing that the error can be split into a residual indicator term and a geometric error term. In a similar fashion, \cite{ju2009posteriori} derived an estimator for the finite volume discretisation of a steady convection-diffusion-rection equation on surfaces. \cite{mekchay2011afem} have considered an adaptive finite element method for the Laplace-Beltrami operator on $C^{1}$ graphs. In light of the benefits of DG methods for hp-adaptivity, it is natural to extend the DG framework to surfaces and this is a first step towards that direction.

This paper is organised in the following way. We consider a linear second-order elliptic equation on a compact smooth connected and oriented surface $\Gamma \subset \mathbb{R}^{3}$ and consider an interior penalty (IP) method on a triangulated surface $\Gamma_{h}$, introduced in \cite{dedner2012analysis}. We derive a dual weighted residual-based a posteriori error estimator, and show its reliability and efficiency in the energy norm. The estimator has a similar structure to the one derived for surface FEM in \cite{demlow2008adaptive}, with both a standard \emph{residual} term and a higher order \emph{geometric} residual.

We then present some numerical results, making use of the Distributed and
Unified Numerics Environment (DUNE) software package (see
\cite{dunegridpaperII:08}, \cite{dunegridpaperI:08}) and, in particular,
the DUNE-FEM module described in \cite{dunefempaper:10} (also see
{\em dune.project.org/fem} for more details on this module). We consider a number of test problems which numerically verify the reliability and efficiency of the estimator. In the process, we also present a computationally efficient adaptive refinement strategy which makes use of the geometric residual of the estimator. 

\section{Notation and Setting}\label{sec:NotationAndSetting}
The notation in this section closely follows that used in \cite{demlow2008adaptive} and \cite{dedner2012analysis}.
\subsection{Smooth surface $\Gamma$ and problem formulation.}
Let $\Gamma$ be a connected $C^{2}$ compact smooth and oriented surface in $\mathbb{R}^{3}$ given by the zero level set of a signed distance function $|d(x)| = dist(x,\Gamma)$ defined on an open subset $U$ of $\mathbb{R}^{3}$. For simplicity we assume that $\partial \Gamma = \emptyset$ and that $d < 0$ on the interior of $\Gamma$ and $d > 0$ on the exterior. The outward unit normal $\nu$ of $\Gamma$ is thus given by
\[\nu(\xi) = \nabla d(\xi),\ \xi \in \Gamma. \]
With a slight abuse of notation we also denote the projection to $\Gamma$ by $\xi$, i.e.~$\xi:U \rightarrow \Gamma$ is given by
\begin{equation}\label{eq:uniquePoint}
\xi(x) = x - d(x)\nu(x) \quad \mbox{where } \nu(x):=\nu(\xi(x)).
\end{equation}
It is worth noting that such a projection is (locally) unique provided that the width $\delta_{U} > 0$ of $U$ satisfies
\[\delta_{U} < \left[\max_{i=1,2}\norm{\kappa_{i}}_{L^{\infty}(\Gamma)}\right]^{-1}\]
where $\kappa_{i}$ denotes the $i$th principle curvature of the Weingarten map given by $\mathbf{H}(x) := \nabla^{2}d(x)$. Throughout this paper, we denote by 
\[ \mathbf{P} := \mathbf{I} - \nu \otimes \nu, \]
the projection onto the tangent space $T_{\xi}\Gamma$ on $\Gamma$ at a point $\xi \in \Gamma$. Here $\otimes$ denotes the usual tensor product.
\begin{definition}
For any function $\eta$ defined on an open subset of $U$ containing $\Gamma$ we can define its \emph{tangential gradient} on $\Gamma$ by
\[ \nabla_{\Gamma}\eta := \nabla \eta - (\nabla \eta \cdot \nu) \nu = \mathbf{P}\nabla \eta \]
and then the \emph{Laplace-Beltrami} operator on $\Gamma$ by 
\[ \Delta_{\Gamma} \eta := \nabla_{\Gamma}\cdot (\nabla_{\Gamma} \eta).
\]
\end{definition}
\begin{definition}
We define the surface Sobolev spaces
\[ H^{m}(\Gamma) := \{u \in L^{2}(\Gamma) \ : \ D^{\alpha}u \in L^{2}(\Gamma)\ \forall |\alpha| \leq m \}, \quad m \in \mathbb{N} \cup \{ 0 \}, \]
with corresponding Sobolev seminorm and norm respectively given by
\[ 
|u|_{H^{m}(\Gamma)} := \left(\sum_{|\alpha|=m} \norm{D^{\alpha}u}_{L^{2}(\Gamma)}^{2}\right)^{1/2}, \quad 
\norm{u}_{H^{m}(\Gamma)} := \left(\sum_{k=0}^m |u|_{H^{k}(\Gamma)}^{2}\right)^{1/2}.
\]
\end{definition}     
We refer to \cite{wlokapartial} for a proper discussion of Sobolev spaces on manifolds. 

The problem that we consider in this paper is the following second-order elliptic equation:
\begin{equation}\label{eq:EllipticGamma}
-\Delta_{\Gamma} u + u = f
\end{equation}
for a given $f \in L^{2}(\Gamma)$. Using integration by parts on surfaces the weak problem reads:
\\
\\
$(\mathbf{P}_{\Gamma})$ Find $u \in H^{1}(\Gamma)$ such that  
\begin{equation} \label{eq:weakH1}
 a_{\Gamma}(u,v) =
   \int_\Gamma fv\ \mbox{d}\sigma \quad\forall v\in H^1(\Gamma) 
\end{equation}
where
\[ a_{\Gamma}(u,v) := \int_\Gamma \nabla_{\Gamma}u\cdot \nabla_{\Gamma}v + u v\ \mbox{d}\sigma.  \]
Existence and uniqueness of a solution $u$ follows from standard arguments. See \cite{aubin1982nonlinear} and \cite{wlokapartial} for further details.

\subsection{Discrete problem on $\Gamma_{h}$}
The smooth surface $\Gamma$ is approximated by a polyhedral surface $\Gamma_{h} \subset U$ composed of planar triangles. Let $\mathcal{T}_{h}$ be the associated regular conforming triangulation of $\Gamma_{h}$ i.e.
\[ \Gamma_{h} = \bigcup_{K_{h} \in \mathcal{T}_{h}} K_{h}. \]
Let $\nu_{h}$ denote the outward unit normal on $\Gamma_{h}$, and let $\mathcal{N}$ denote the set of nodes of triangles in $\mathcal{T}_{h}$. The vertices are taken to sit on $\Gamma$, i.e. $\mathcal{N} \subset \Gamma$. We assume that $\xi : \Gamma_{h} \rightarrow \Gamma$ is bijective and that $\nu \cdot \nu_{h} \geq 0$ everywhere on $\Gamma_{h}$. We also denote by $h_{K_{h}}$ the largest edge of $K_{h} \in \mathcal{T}_{h}$. Given $p \in \mathcal{N}$, we define the patch $w_{p} = interior(\cup_{K_{h} | p \in \bar{K}_{h}}\bar{K}_{h})$ and let $h_{p} = \max_{K_{h} \subset w_{p}}h_{K_{h}}$. Let $\mathcal{E}_{h}$ denote the set of all codimension one intersections of elements $K_h^{+},K_h^{-}\in \mathcal{T}_{h}$ (i.e., the edges) and denote by $h_{e_{h}}$ the length scale associated with an edge $e_{h} \in \mathcal{E}_{h}$. We define the conormal $n_h^{+}$ on such an intersection $e_{h} \in \mathcal{E}_{h}$ of elements $K_h^{+}$ and $K_h^{-}$ by demanding that \\[1mm]
$\bullet$ $n_h^{+}$ is a unit vector,\\
$\bullet$ $n_h^{+}$ is tangential to (the planar triangle) $K_h^{+}$, \\
$\bullet$ in each point $x \in e_h$ we have that $n_h^{+} \cdot (y-x) \leq 0$ for all $y \in K_h^{+}$. \\[1mm]
Analogously one can define the conormal $n_h^{-}$ on $e_h$ by exchanging $K_h^{+}$ with $K_h^{-}$. Note that, in general, $n_{h}^{+} \not = -n_{h}^{-}$.

Let $\hat{K} \subset \mathbb{R}^{2}$ be the reference element and let $F_{K_{h}} : \hat{K}\rightarrow K_{h} \subset  \mathbb{R}^{3}$ for $K_{h} \in \mathcal{T}_{h}$. We define the DG space associated with $\Gamma_h$ by 
\begin{align*}
V_{h} =\{ v_{h} \in L^2(\Gamma_h): v_{h}|_{K_{h}}= \hat{v} \circ F_{K_{h}}^{-1} \text{ for some } \hat{v} \in \mathbb{P}^{k}(\hat{K})\ \ \ \forall K_h \in \mathcal{T}_h\}.
\end{align*}
In addition, we define the vector-valued function space
\[ \Sigma_{h} := \{ \tau_{h} \in [L^2(\Gamma_h)]^{3}: \tau_{h}|_{K_{h}}= \nabla F_{K_{h}}^{-T}\left( \hat{\tau} \circ F_{K_{h}}^{-1} \right) \text{ for some } \hat{\tau} \in [\mathbb{P}^{k}(\hat{K})]^{2}\ \ \ \forall K_h\in \mathcal{T}_h \}. \]
Here, $\nabla F_{K_{h}}^{-1}$ refers to the (left) \emph{pseudo-inverse} of $\nabla F_{K_{h}}$ i.e.  
\[\nabla F_{K_{h}}^{-1} = \left(\nabla F_{K_{h}}^{T}\nabla F_{K_{h}}\right)^{-1}\nabla F_{K_{h}}^{T}.\]
For $v_h\in V_{h}$, let 
\[
v_h^{+/-}:= v_h \big{|}_{\partial K_h^{+/-}}.
\]
\begin{definition}\label{def:jumpAverageOparators}
Let $\tilde{n} \in \mathbb{R}^{3}$. For $q \in \Pi_{K_{h} \in \mathcal{T}_{h}}L^{2}(\partial K_{h}) $, let $\{q\}$ and $[q]$ by given by
\[ \{q\} := \frac{1}{2}(q^{+}+q^{-}),\   \ [q] := q^{+} - q^{-}\   \ \mbox{on}\ e_{h} \in \mathcal{E}_{h}. \]  
For $\varphi \in [\Pi_{K_{h} \in \mathcal{T}_{h}}L^{2}(\partial K_{h})]^{3}$, $\{\varphi;\tilde{n}\}$ and $[\varphi;\tilde{n}]$ are given by 
\[ \{\varphi;\tilde{n}\} := \frac{1}{2}(\varphi^{+} \cdot \tilde{n}^{+} - \varphi^{-} \cdot \tilde{n}^{-}),\   \ [\varphi;\tilde{n}] := \varphi^{+}\cdot \tilde{n}^{+}+\varphi^{-}\cdot \tilde{n}^{-}\   \ \mbox{on}\ e_{h} \in \mathcal{E}_{h}. \]
\end{definition}
We now formulate our discrete problem on $\Gamma_{h}$ for a given function $f_{h} \in L^{2}(\Gamma_{h})$ (note that, in general, this is not a finite element function, it will be related to the function $f$ given in problem $(\mathbf{P}_{\Gamma})$ later on, see (\ref{eq:rel_f_fh}) below):
\\
\\
$\left(\mathbf{P}_{\Gamma_{h}}^{IP}\right)$ Find $u_{h} \in V_{h}$  such that
\begin{equation} \label{eq:InteriorPenaltyGammah}
a_{\Gamma_{h}}^{IP}(u_{h},v_{h}) = \sum_{K_{h} \in \mathcal{T}_{h}}\int_{K_{h}}f_{h} v_{h}\ \mbox{d}\sigma_{h}\ \forall v_{h} \in V_{h}
\end{equation}
where
\begin{align}\label{eq:InteriorPenaltyGammahForm}
a_{\Gamma_{h}}^{IP}(u_{h},v_{h}) &:= \sum_{K_{h} \in \mathcal{T}_{h}}\int_{K_{h}}\nabla_{\Gamma_{h}}u_{h}\cdot \nabla_{\Gamma_{h}}v_{h} + u_{h} v_{h}\ \mbox{d}\sigma_{h} - \sum_{e_{h} \in \mathcal{E}_{h}}\int_{e_{h}}[u_{h}]\{\nabla_{\Gamma_{h}}v_{h}; n_{h} \} + [v_{h}]\{\nabla_{\Gamma_{h}}u_{h}; n_{h}\}\ \mbox{d}s_{h}\notag \\ 
&+ \sum_{e_{h} \in \mathcal{E}_{h}}\int_{e_{h}}\beta_{e_{h}}[u_{h}][v_{h}]\ \mbox{d}s_{h}
\end{align}
is the (symmetric) IP method considered in \cite{dedner2012analysis}. Note
that our definition of the jump and average operators depend on the
a-priori choice of $K_h^+,K_h^-$ but the bilinear form is independent of
this choice since only products of these operators occur. The penalty parameters
$\beta_{e_{h}}$ are given by $\beta_{e_{h}} = \omega_{e_{h}}h_{e_{h}}^{-1}$ where $h_{e_{h}}$ is some length scale
associated with the intersection $e_{h}$ (for instance, the edge length). The interior penalty parameters $\omega_{e_{h}}$ are uniformly bounded with respect to $h := \max_{e_{h} \in \mathcal{E}_{h}} h_{e_{h}}$. 

We now define a norm on the space of piecewise smooth functions: 
\begin{definition}\label{def:DGNorm}
For $u_{h} \in V_{h}$ we define
\[ |u_{h}|_{1,h}^{2} := \sum_{K_{h} \in \mathcal{T}_{h}} \norm{u_{h}}_{H^{1}(K_{h})}^{2}\    \ , \    \ |u|_{*,h}^{2} := \sum_{e_{h} \in \mathcal{E}_{h}} h_{e_{h}}^{-1}  \norm{[u_{h}]}_{L^{2}(e_{h})}^{2}.\]
The $DG$ norm on $\Gamma_{h}$ is given by
\[ \norm{u_{h}}_{DG(\Gamma_{h})}^{2} := |u_{h}|_{1,h}^{2} + |u_{h}|_{*,h}^{2}. \]
\end{definition}
Boundedness and stability of (\ref{eq:InteriorPenaltyGammahForm}) follow in a similar fashion as for the classical, planar IP method (see \cite{arnold2002unified} for more details) provided that the penalty parameters $\omega_{e_{h}}$ are large enough. This is because $\Gamma_{h}$ is composed of planar triangles.

\subsection{Surface lifting onto $\Gamma$}
\begin{definition}
For any function $w$ defined on $\Gamma_{h}$ we define the \emph{surface lift} onto $\Gamma$ by
\[ w^{l}(\xi) := w(x(\xi)),\ \xi \in \Gamma, \]
where $x(\xi)$ is defined as the unique solution of
\[ x = \xi + d(x)\nu(\xi).\]
\end{definition}
In particular, we 
\begin{equation} \label{eq:rel_f_fh}
\mbox{define }f_{h} \mbox{ such that } f_{h}^{l} = f \mbox{ on } \Gamma.
\end{equation}
We also denote by $\tilde{w}^{-l}$ the \emph{inverse} surface lift onto $\Gamma_{h}$ of some function $\tilde{w}$ defined on $\Gamma$, satisfying $(\tilde{w}^{-l})^l = \tilde{w}$. Furthermore, for every $K_{h} \in \mathcal{T}_{h}$, there is a unique curved triangle $K_{h}^{l} := \xi(K_{h}) \subset \Gamma$. We now define the regular, conforming triangulation $\mathcal{T}_{h}^{l}$ of $\Gamma$ such that
\[ \Gamma = \bigcup_{K_{h}^{l} \in \mathcal{T}_{h}^{l}} K_{h}^{l}. \] 
The triangulation $\mathcal{T}_{h}^{l}$ of $\Gamma$ is thus induced by the triangulation $\mathcal{T}_{h}$ of $\Gamma_{h}$ via the surface lift. Similarly, $e_{h}^{l}:=\xi(e_{h}) \in \mathcal{E}_{h}^{l}$ are the unique curved edges.

The appropriate function space for surface lifted functions are given by
\[V_{h}^{l} := \{v_{h}^{l} \in L^{2}(\Gamma)\ : \ v_{h}^{l}(\xi) = v_{h}(x(\xi))\ \mbox{for some}\ v_{h} \in V_{h} \}\]
and
\[\Sigma_{h}^{l} := \{ \tau_{h}^{l} \in [L^2(\Gamma)]^{3}: \tau_{h}^{l}(\xi) = \nabla \xi^{-T} \tau_{h}(x(\xi)) \text{ for some } \tau_{h} \in \Sigma_{h}\}.\]
Note that the DG norm for functions $u_{h}^{l} \in V_{h}^{l}$ is the same one as in Definition \ref{def:DGNorm} but with the triangulation $\mathcal{T}_{h}^{l}$ instead and corresponding length scale $h_{e_{h}^{l}}$ associated with $e_{h}^{l}$. We will denote this norm by $DG(\Gamma)$.

We define for $x \in \Gamma_{h}$
\[\mathbf{P}_{h}(x) = \mathbf{I} - \nu_{h}(x) \otimes \nu_{h}(x)\]
so that, for $v_{h}$ defined on $\Gamma_{h}$ and $x \in \Gamma_{h}$,
\[\nabla_{\Gamma_{h}}v_{h}(x) = \mathbf{P}_{h}\nabla v_{h}(x).\]
Finally, one can show that for $x \in \Gamma_{h}$ and $v_{h}$ defined on $\Gamma_{h}$, we have
\begin{equation}\label{eq:Gammah2GammaGradient} 
\nabla_{\Gamma_{h}} v_{h}(x) = \mathbf{P}_{h}(x)(\mathbf{I} - d\mathbf{H})(x)\mathbf{P}(x)\nabla_{\Gamma} v_{h}^{l}(\xi(x))  
\end{equation}
and
\begin{equation}\label{eq:Gamma2GammahGradient}
\nabla_{\Gamma} v_{h}^{l}(\xi(x)) = \mathbf{F}_{h}(x) \nabla_{\Gamma_{h}}v_{h}(x)  
\end{equation}
where $\mathbf{F}_{h}(x) := (\mathbf{I}-d\mathbf{H})(x)^{-1} \left(\mathbf{I} - \frac{\nu_{h} \otimes \nu}{\nu_{h} \cdot \nu} \right)$. Next we state integral equalities which we shall use repeatedly. For $x \in \Gamma_{h}$, let
\begin{equation}\label{eq:changeOfMeasure}
\delta_{h}(x) \mbox{d}\sigma_{h}(x) = \mbox{d}\sigma(\xi(x)), \  \
\delta_{e_{h}}(x) \mbox{d}s_{h}(x) = \mbox{d}s(\xi(x)).
\end{equation}
Note that, by construction, $\delta_{h}(x), \delta_{e_{h}}(x)  > 1$ for all $x$. Also, let
\begin{equation}\label{eq:Ah}
\mathbf{A}_{h}(x) = \mathbf{A}_{h}^{l}(\xi(x))  = \frac{1}{\delta_{h}(x)}\mathbf{P}(x) (\mathbf{I} - d\mathbf{H})(x) \mathbf{P}_{h}(x) (\mathbf{I} - d\mathbf{H})(x) \mathbf{P}(x). 
\end{equation}
Then one can show that
\[ \int_{\Gamma_{h}} \nabla_{\Gamma_{h}}u_{h} \cdot \nabla_{\Gamma_{h}}v_{h}\ \mbox{d}\sigma_{h} = \int_{\Gamma} \mathbf{A}_{h}^{l} \nabla_{\Gamma}u_{h}^{l} \cdot \nabla_{\Gamma}v_{h}^{l}\ \mbox{d}\sigma.  \]
It is worth noting that the geometric quantities $d$ and $\mathbf{H}$ in (\ref{eq:Ah}) are always evaluated on the discrete surface $\Gamma_{h}$.
Finally, we define 
\begin{align}\label{eq:Bh}
 \mathbf{B}_{h} := \sqrt{\delta_{h}} (\mathbf{P} - \mathbf{A}_{h})\mathbf{F}_{h} 
\end{align}
which will be used in the error estimates later on.

We can derive explicit formulas for the quantities $\delta_{h}$ and $\delta_{e_{h}}$ 
defined above.
\begin{lemma}
Assume that $x \in \Gamma_{h}$. Then
\begin{align}
&\delta_{h}(x) = (1-d(x)\kappa_{1}(x))(1 - d(x) \kappa_{2}(x))\nu \cdot \nu_{h}, \\
&\delta_{e_{h}}(x) = \norm{\nabla \xi(x) \tau_{h}(x)}_{l^{2}}
\end{align}
where $\tau_{h}$ is the unit tangent on $e_{h}$.
\end{lemma}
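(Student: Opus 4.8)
The goal is to compute the two change-of-measure factors $\delta_h$ and $\delta_{e_h}$ explicitly in terms of the signed distance $d$, the principal curvatures $\kappa_i$, and the normals. The whole computation rests on differentiating the lift map. Recall that a point $x \in \Gamma_h$ is sent to $\xi(x) \in \Gamma$ by the projection $\xi(x) = x - d(x)\nu(x)$, so the Jacobian of this map governs how area and length elements transform. The plan is to compute $\nabla \xi(x)$ explicitly, and then to express $\delta_h$ as the area-scaling factor and $\delta_{e_h}$ as the length-scaling factor associated with this Jacobian.

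\emph{Step 1: differentiate the projection.} First I would compute the (full) Jacobian $\nabla \xi$. Differentiating $\xi = x - d(x)\nu(x)$ and using $\nabla d = \nu$ together with the Weingarten relation $\nabla \nu = \nabla^2 d = \mathbf{H}$ gives $\nabla \xi = \mathbf{I} - \nu \otimes \nu - d\,\mathbf{H} = \mathbf{P} - d\mathbf{H}$, all quantities evaluated along $\Gamma_h$. Since $\mathbf{H}$ acts on the tangent space $T_\xi\Gamma$ with eigenvalues $\kappa_1,\kappa_2$ (and annihilates $\nu$), the map $\mathbf{I} - d\mathbf{H}$ restricted to $T_\xi\Gamma$ has eigenvalues $1 - d\kappa_1$ and $1 - d\kappa_2$.

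\emph{Step 2: the area factor $\delta_h$.} The factor $\delta_h$ is defined by $\delta_h(x)\,\mathrm{d}\sigma_h(x) = \mathrm{d}\sigma(\xi(x))$, so it is the ratio of the infinitesimal surface area on $\Gamma$ to that on $\Gamma_h$. Concretely, if $t_1,t_2$ is an orthonormal basis of the tangent plane to $K_h$, then $\delta_h = \|\nabla\xi\, t_1 \times \nabla\xi\, t_2\|$. The clean way to evaluate this is to split $\nabla\xi = (\mathbf{I}-d\mathbf{H})\mathbf{P}$ into its action along the two surface principal directions and the normal correction: the tangential part contributes the product $(1-d\kappa_1)(1-d\kappa_2)$ of the eigenvalues, while the discrepancy between the tangent plane of $\Gamma_h$ and that of $\Gamma$ contributes the factor $\nu\cdot\nu_h$ (the cosine of the tilt angle between the two normals). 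Combining these yields $\delta_h = (1-d\kappa_1)(1-d\kappa_2)\,\nu\cdot\nu_h$, which is manifestly $>1$ under the stated geometric assumptions.

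\emph{Step 3: the line factor $\delta_{e_h}$.} For an edge $e_h$ with unit tangent $\tau_h$, the lifted edge $e_h^l = \xi(e_h)$ has arclength element $\mathrm{d}s(\xi(x)) = \|\nabla\xi(x)\,\tau_h(x)\|_{l^2}\,\mathrm{d}s_h(x)$ by the one-dimensional change of variables along the curve, giving $\delta_{e_h}(x) = \|\nabla\xi(x)\,\tau_h(x)\|_{l^2}$ directly. The main obstacle will be Step 2: unlike the one-dimensional edge case, the surface area scaling requires carefully separating the in-plane stretching (the curvature eigenvalues) from the out-of-plane tilt ($\nu\cdot\nu_h$), and checking that the cross-product computation really produces the product form rather than a messier determinant expression. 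Once $\nabla\xi$ is in hand and its eigenstructure on $T_\xi\Gamma$ is identified, both formulas follow from standard multivariable change-of-variables arguments.
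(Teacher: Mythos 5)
Your proposal is correct, and the relation to the paper's own treatment differs between the two formulas. For the edge factor $\delta_{e_h}$ your argument is in substance identical to the paper's: the paper pulls $e_h$ and $e_h^l=\xi(e_h)$ back to a reference edge $\hat e$ via a linear chart $f$ with $F=f'=\lambda\tau_h$, obtaining $\mbox{d}s=|\lambda|\,\|\nabla\xi\,\tau_h\|\,\mbox{d}\hat s$ and $\mbox{d}s_h=|\lambda|\,\mbox{d}\hat s$, which is precisely your one-dimensional change of variables along the curve, merely routed through a reference parametrization. For the area factor $\delta_h$, however, the paper proves nothing itself: it simply cites \cite{demlow2008adaptive}. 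You instead reconstruct the argument: compute $\nabla\xi=\mathbf{I}-\nu\otimes\nu-d\mathbf{H}=\mathbf{P}-d\mathbf{H}=(\mathbf{I}-d\mathbf{H})\mathbf{P}$ (valid since $\mathbf{H}\nu=0$, hence $\mathbf{H}\mathbf{P}=\mathbf{H}$), factor the restriction $\nabla\xi|_{T_x\Gamma_h}$ as $(\mathbf{I}-d\mathbf{H})|_{T_\xi\Gamma}\circ\mathbf{P}|_{T_x\Gamma_h}$, and multiply the two area scalings: the first contributes $\det\bigl[(\mathbf{I}-d\mathbf{H})|_{T_\xi\Gamma}\bigr]=(1-d\kappa_1)(1-d\kappa_2)$, while the projection between the two tangent planes scales area by $\nu\cdot\nu_h$ (to see this, take a unit vector $t_1\in T_x\Gamma_h\cap T_\xi\Gamma$ and $t_2=\nu_h\times t_1$; then $\mathbf{P}t_1=t_1$, $\mathbf{P}t_2\perp t_1$ and $\|\mathbf{P}t_2\|=\nu\cdot\nu_h$). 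This multiplicative splitting into in-plane stretching and tilt is in fact the computation carried out in the cited reference, so your route supplies the proof the paper outsources rather than a genuinely different one; what it buys is a self-contained lemma. Two small points to make explicit in a write-up: the $\kappa_i(x)$ in the statement are the eigenvalues of $\mathbf{H}(x)=\nabla^2 d(x)$ evaluated at $x\in\Gamma_h$, as the paper defines them, not the principal curvatures at the lifted point (with that convention your eigenvalue identification is exact, since $\mathbf{H}(x)$ annihilates $\nu$ and preserves $\nu^\perp=T_{\xi(x)}\Gamma$); and the standing assumptions ($\delta_U$ small so $1-d\kappa_i>0$, and $\nu\cdot\nu_h\geq 0$) are what allow you to drop the absolute values from the Jacobian determinant.
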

\begin{proof}
See \cite{demlow2008adaptive} for the first expression. To prove the second expression, we do the following: let $\hat{e} \subset \mathbb{R}$ be a reference edge for codimension one entities. Let $f : \hat{e} \rightarrow e_{h} \subset \Gamma_{h}$ be the linear transformation from the reference edge to the edge $e_{h} \in \mathcal{E}_{h}$. $F := f' \in \mathbb{R}^{3 \times 1}$ is tangent to $e_{h}$ and so $F = \lambda \tau_{h}$ where $\lambda \in \mathbb{R}$. Hence we have that $\mbox{d}s = \sqrt{|F^{T}\nabla \xi^{T}\nabla \xi F|} \mbox{d} \hat{s} = |\lambda||\nabla \xi \tau_{h}| \mbox{d} \hat{s}$ where $ \nabla \xi \in \mathbb{R}^{3 \times 3}$ is the gradient of the projection mapping $\xi$ given in (\ref{eq:uniquePoint}) and $\mbox{d} \hat{s}$ is the Lebesgue measure on $\hat{e}$. Similarly, we have that  $\mbox{d}s_{h} = |\lambda| \mbox{d} \hat{s}$ and the second expression follows.  
\end{proof}

\subsection{Cl\'{e}ment interpolant}
We now define a quasi-interpolant and state some estimates that it must satisfy. Given $z \in L^{1}(\Gamma)$ and $p \in \mathcal{N}$, we let
\begin{equation}\label{eq:ClementInterpolantWeights}
z^{-l}_{p} := \frac{1}{\int_{w_{p}}\varphi_{p}\ \mbox{d}\sigma_{h}}\int_{w_{p}} \varphi_{p} z^{-l}\ \mbox{d}\sigma_{h}  
\end{equation}
where $\varphi_{p} \in V_{h} \cap H^{1}(\Gamma_{h})$ denotes the Lagrange nodal basis function associated with $p$, and define
\begin{equation}\label{eq:ClementInterpolant} 
I_{h} z^{-l} = \sum_{p \in \mathcal{N}}z_{p}^{-l} \varphi_{p}. 
\end{equation}
We note a useful property that the weights $z^{-l}_{p}$ satisfy (see (2.2.33) in \cite{demlow2008adaptive}):
\begin{equation}\label{eq:ClementInterpolantWeightsProperty}
\norm{z^{-l}_{p}}_{L^{2}(w_{p})} \leq \sqrt{\frac{3}{2}} \norm{z^{-l}}_{L^{2}(w_{p})} \leq \sqrt{\frac{3}{2}}\norm{\sqrt{\delta_{h}}}_{L^{\infty}(w_{p})} \norm{z}_{L^{2}(w_{p}^l)}.  
\end{equation}
Since $\{ \varphi_{p} \}$ is a partition of unity i.e. $\sum_{p \in \mathcal{N}}\varphi_{p} =1$, we also have the following:
\begin{equation}\label{eq:ClementInterpolantProperty}
\int_{\Gamma_{h}}(z^{-l} - I_{h} z^{-l})\ \mbox{d}\sigma_{h} = \sum_{p \in \mathcal{N}}\int_{w_{p}}(z^{-l} - z^{-l}_{p})\varphi_{p}\ \mbox{d}\sigma_{h} = 0. 
\end{equation}
The Clement interpolant satisfies the following estimates.
\begin{lemma}\label{thm:PoicareInequalityShapeRegular}
Let $z \in H^{1}(\Gamma)$. Assume that the mesh $\mathcal{T}_{h}$ is shape-regular and that the number of elements sharing the node $p$ is bounded. Let $w_{p}^{l}$ be the surface lift of the patch $w_{p}$ onto $\Gamma$. Then for each $p \in \mathcal{N}$, we have
\begin{equation}\label{eq:PoicareInequalityL2PatchShapeRegular}
\norm{z^{-l} - z_{p}^{-l}}_{L^{2}(w_{p})} \leq C h_{p}\norm{\mathbf{A}_{h}}^{\frac{1}{2}}_{l^{2},L^{\infty}(w_{p})}\norm{\nabla_{\Gamma}z}_{L^{2}(w_{p}^{l})}.
\end{equation}
Let also $p \in \bar{e}_{h} \subset \mathcal{E}_{h}$. Then
\begin{equation}\label{eq:PoicareInequalityL2edgeShapeRegular}
\norm{z^{-l} - z_{p}^{-l}}_{L^{2}(e_{h})} \leq C h_{p}^{\frac{1}{2}}\norm{\mathbf{A}_{h}}^{\frac{1}{2}}_{l^{2},L^{\infty}(w_{p})}\norm{\nabla_{\Gamma}z}_{L^{2}(w_{p}^{l})}.
\end{equation}
Note that $C$ does not depend on any essential quantities. Here $\norm{\mathbf{A}_{h}}_{l^{2},L^{\infty}(w_{p})} := \norm{\norm{\mathbf{A}_{h}}_{l^{2}\rightarrow l^{2}}}_{L^{\infty}(w_{p})}$. 
\end{lemma}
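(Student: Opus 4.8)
The plan is to establish the two Poincar\'{e}-type inequalities (\ref{eq:PoicareInequalityL2PatchShapeRegular}) and (\ref{eq:PoicareInequalityL2edgeShapeRegular}) by transferring the problem from the discrete patch $w_{p}$ on $\Gamma_{h}$ to the lifted patch $w_{p}^{l}$ on $\Gamma$, where a standard scaled Poincar\'{e} inequality applies, and then tracking the geometric distortion factors carefully. First I would observe that $z_{p}^{-l}$ is, up to the weighting by $\varphi_{p}$, a weighted average of $z^{-l}$ over $w_{p}$, so the quantity $\norm{z^{-l}-z_{p}^{-l}}_{L^{2}(w_{p})}$ measures the deviation of $z^{-l}$ from a (generalized) mean. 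The key is that $z^{-l}_{p}$ satisfies a zero-mean-type relation against $\varphi_{p}$ (compare (\ref{eq:ClementInterpolantProperty})), which lets me replace $z_{p}^{-l}$ by \emph{any} constant at the cost of controllable errors and invoke a Poincar\'{e} inequality on the patch.

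Second, I would pass to the lifted surface. Using the change of measure (\ref{eq:changeOfMeasure}), $\norm{z^{-l}-c}_{L^{2}(w_{p})}^{2}=\int_{w_{p}}|z^{-l}-c|^{2}\,\mbox{d}\sigma_{h}$ can be bounded by $\norm{\sqrt{\delta_{h}}^{-1}}_{L^{\infty}}^{2}\int_{w_{p}^{l}}|z-c|^{2}\,\mbox{d}\sigma$ after substituting $\mbox{d}\sigma_{h}=\delta_{h}^{-1}\,\mbox{d}\sigma$; since $\delta_{h}>1$ by construction this prefactor is harmless. On the smooth patch $w_{p}^{l}\subset\Gamma$ I can then apply the standard scaled Poincar\'{e}/Bramble--Hilbert inequality on a shape-regular lifted patch: choosing $c$ to be the correct surface mean gives $\norm{z-c}_{L^{2}(w_{p}^{l})}\leq C h_{p}\norm{\nabla_{\Gamma}z}_{L^{2}(w_{p}^{l})}$, where the constant depends only on shape regularity and the bound on the number of elements meeting $p$ (both assumed). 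This produces the factor $h_{p}$ in (\ref{eq:PoicareInequalityL2PatchShapeRegular}).

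The final ingredient, and the step I expect to be the main obstacle, is to reintroduce the tangential gradient of $z$ measured against the correct geometric weight $\norm{\mathbf{A}_{h}}^{1/2}_{l^{2},L^{\infty}(w_{p})}$ rather than a bare $\norm{\nabla_{\Gamma}z}_{L^{2}(w_{p}^{l})}$. To get the factor $\norm{\mathbf{A}_{h}}^{1/2}_{l^{2},L^{\infty}(w_{p})}$, I would \emph{not} bound $\norm{\nabla_{\Gamma}z}_{L^{2}(w_{p}^{l})}$ directly on $\Gamma$; instead I would use the relation between the discrete and lifted gradients. The cleanest route is to recall (from the definition (\ref{eq:Ah}) of $\mathbf{A}_{h}$ and the identity $\int_{\Gamma_{h}}\nabla_{\Gamma_{h}}u_{h}\cdot\nabla_{\Gamma_{h}}v_{h}\,\mbox{d}\sigma_{h}=\int_{\Gamma}\mathbf{A}_{h}^{l}\nabla_{\Gamma}u_{h}^{l}\cdot\nabla_{\Gamma}v_{h}^{l}\,\mbox{d}\sigma$) that $\mathbf{A}_{h}$ precisely encodes the pullback of the surface metric onto $\Gamma_{h}$, so that a Poincar\'{e} inequality performed intrinsically on the discrete patch $w_{p}$ naturally produces $\norm{\mathbf{A}_{h}}^{1/2}_{l^{2},L^{\infty}(w_{p})}\norm{\nabla_{\Gamma}z}_{L^{2}(w_{p}^{l})}$ after lifting. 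Concretely, the operator norm $\norm{\mathbf{A}_{h}}_{l^{2}\to l^{2}}$ governs how $\norm{\nabla_{\Gamma_{h}}z^{-l}}_{L^{2}(w_{p})}$ compares to $\norm{\nabla_{\Gamma}z}_{L^{2}(w_{p}^{l})}$, and taking the $L^{\infty}(w_{p})$ norm of this operator norm out of the integral yields the stated constant. I expect the delicate bookkeeping here---matching the powers of $\delta_{h}$, the direction of the gradient transformation in (\ref{eq:Gammah2GammaGradient})--(\ref{eq:Gamma2GammahGradient}), and the uniformity of $C$ across all patches---to be where the real work lies.

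For the edge estimate (\ref{eq:PoicareInequalityL2edgeShapeRegular}) I would proceed identically but replace the $L^{2}(w_{p})$ Poincar\'{e} inequality by a trace/Poincar\'{e} inequality that controls $\norm{z^{-l}-z_{p}^{-l}}_{L^{2}(e_{h})}$ on an edge $e_{h}\subset\bar{w}_{p}$; a standard scaled trace inequality on the shape-regular patch produces the half-power $h_{p}^{1/2}$ in place of the full power $h_{p}$, while the geometric factor $\norm{\mathbf{A}_{h}}^{1/2}_{l^{2},L^{\infty}(w_{p})}$ and the right-hand side $\norm{\nabla_{\Gamma}z}_{L^{2}(w_{p}^{l})}$ are obtained exactly as before. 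The scaling $h_{p}^{1/2}$ is dimensionally forced: a trace onto a codimension-one edge trades one power of the local length scale for a half power, consistent with the change of measure $\delta_{e_{h}}\,\mbox{d}s_{h}=\mbox{d}s$ on edges from (\ref{eq:changeOfMeasure}).
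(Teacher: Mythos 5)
The paper itself does not write out a proof of this lemma; it defers to \cite{demlow2008adaptive}, where the argument runs in the \emph{opposite order} from yours. There, the Poincar\'{e} inequality is applied first on the discrete, piecewise-planar patch $w_{p}$, giving $\norm{z^{-l}-z_{p}^{-l}}_{L^{2}(w_{p})} \leq C h_{p}\norm{\nabla_{\Gamma_{h}}z^{-l}}_{L^{2}(w_{p})}$ with $C$ controlled by shape regularity and the bounded number of elements meeting $p$ (this is exactly where those two hypotheses are used, after handling the fact that $w_{p}$ is a piecewise-flat cone whose total vertex angle need not be $2\pi$); only then is the discrete gradient transferred to $\Gamma$ via the exact energy identity
\[
\int_{w_{p}}|\nabla_{\Gamma_{h}}z^{-l}|^{2}\ \mbox{d}\sigma_{h} \;=\; \int_{w_{p}^{l}}\mathbf{A}_{h}^{l}\nabla_{\Gamma}z\cdot\nabla_{\Gamma}z\ \mbox{d}\sigma \;\leq\; \norm{\mathbf{A}_{h}}_{l^{2},L^{\infty}(w_{p})}\norm{\nabla_{\Gamma}z}_{L^{2}(w_{p}^{l})}^{2},
\]
which is the sole source of the factor $\norm{\mathbf{A}_{h}}_{l^{2},L^{\infty}(w_{p})}^{1/2}$. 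The edge bound is obtained the same way, with a scaled trace inequality on the flat triangles adjacent to $e_{h}$ supplying the half power $h_{p}^{1/2}$.

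The genuine gap in your proposal is your step 2: you apply a scaled Poincar\'{e} (resp.\ trace) inequality on the \emph{lifted} patch $w_{p}^{l}$ and claim its constant depends only on shape regularity and bounded valence. Shape regularity of $\mathcal{T}_{h}$ constrains the flat patch $w_{p}$, not the intrinsic shape of its image under $\xi$; on coarse meshes --- precisely the regime this a posteriori framework must cover, since no smallness of $h$ relative to the curvature of $\Gamma$ is assumed anywhere in the lemma --- $w_{p}^{l}$ can be a badly distorted piece of $\Gamma$, and its Poincar\'{e} constant is not $Ch_{p}$ with $C$ uniform in the stated sense. This flaw also makes the proposal internally inconsistent at the step you yourself flag as the main obstacle: if your step 2 were valid as stated, then (since $\delta_{h}^{-1}<1$, so the change of measure only helps) the lemma would already follow \emph{without} the $\norm{\mathbf{A}_{h}}^{1/2}$ factor, and your step 3, which tries to ``reintroduce'' that factor by comparing $\nabla_{\Gamma_{h}}z^{-l}$ with $\nabla_{\Gamma}z$, has nothing to attach to --- that comparison enters only when the Poincar\'{e} inequality has been performed on $w_{p}$ with the discrete gradient on the right-hand side. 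The fix is simply to swap the order of your steps 2 and 3: Poincar\'{e}/trace on the flat patch first, then the $\mathbf{A}_{h}$-weighted lift of the Dirichlet energy displayed above; your step 1 (replacing the $\varphi_{p}$-weighted mean by an arbitrary constant) is fine and matches the cited argument.
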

\begin{proof}
See \cite{demlow2008adaptive}. 
\end{proof}

\section{Dual weighted residual equation}
We derive a residual equation for some quantity of interest $J(u - u_{h}^{l})$ where $J$ is some bounded, linear functional acting on $H^{1}(\Gamma) + V_{h}^{l}$. 

\subsection{Bilinear form on $\Gamma$}
Before we state the bilinear form we consider on $\Gamma$, we require the following DG lifting operators. 
\begin{definition}
Let $w \in H^{1}(\Gamma) + V_{h}^{l}$. Define the operators $\mathbf{L}: H^1(\Gamma)+V_{h}^{l} \rightarrow \Sigma_{h}^{l}$ and, for every $e_{h} \in \mathcal{E}_{h}$, $\mathbf{L}_{e_{h}}: H^1(\Gamma)+V_{h}^{l} \rightarrow \Sigma_{h}^{l}$ by respectively
\begin{align}\label{eq:DGLift}
\khlsum{\mathbf{L}(w) \cdot \phi} = \ehlsum{[w] \{\phi ; n \}},\  \  \   \  \khlsum{\mathbf{L}_{e_{h}^l}(w) \cdot \phi} = \int_{e_{h}^{l}}[w] \{\phi ; n \}\ \mbox{d}s
\end{align}
for all $\phi \in \Sigma_{h}^{l}$, where $n^{+}$ and $n^{-}$ are respectively the unit surface conormals to $K_h^{l+}$ and $K_h^{l-}$ on $e_{h}^{l} \in \mathcal{E}_{h}^{l}$, satisfying $n^{+} = -n^{-}$.
\end{definition}
\begin{remark}\label{LiftingEstimateRemark1}
Note that $w \in H^{1}(\Gamma) \Rightarrow \mathbf{L}(w) = 0$ and $\mathbf{L}_{e_{h}^l}(w) = 0$ for all $e_{h} \in \mathcal{E}_{h}$.  
\end{remark}
\begin{remark}\label{LiftingEstimateRemark2}
Note that, for each $e_{h} \in \mathcal{E}_{h}$, $\mathbf{L}_{e_{h}^l}(w)$ vanishes outside the union of the two triangles containing $e_{h}$ and that $ \mathbf{L}(w)  = \sum_{e_{h} \in \mathcal{E}_{h}} \mathbf{L}_{e_{h}^l}(w)$ for all $w \in H^{1}(\Gamma) + V_{h}^{l}$. 
\end{remark}
The DG lifting operator $\mathbf{L}_{e_{h}^l}$ satisfies the following stability estimate:
\begin{lemma} \label{LiftingEstimate}
Let $w_{e_{h}} = K_{h}^{+} \cup K_{h}^{-}$. Then for each $e_{h}^l = K_{h}^{l+} \cap K_{h}^{l-} \in \mathcal{E}_{h}^l$, there exists a constant $C_{L} > 0$ such that
\[\norm{\mathbf{L}_{e_{h}^l}(w)}_{L^{2}(w_{e_{h}}^{l})}^{2} \leq C_{L} \norm{\mathbf{F}_{h}}_{l^{2},L^{\infty}(e_{h})}^{2}\norm{\mathbf{P}_{h}(\mathbf{I} - d\mathbf{H})\mathbf{P}}_{l^{2},L^{\infty}(w_{e_{h}})}^2 \norm{\delta_{e_{h}}}_{L^{\infty}(e_{h})}^2\norm{\sqrt{\beta_{e_{h}}}[w^{-l}]}_{L^{2}(e_{h})}^{2}\]
for every $w \in V_{h}^{l} + H^{1}(\Gamma)$. The constant $C_{L}$ depends solely on the shape-regularity of the mesh and on the upper bound for the penalty parameters $\omega_{e_h}$.
\end{lemma}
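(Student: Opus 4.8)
The plan is to exploit the fact that $\mathbf{L}_{e_h^l}(w)$ is itself an admissible test function in its own defining relation. Taking $\phi = \mathbf{L}_{e_h^l}(w) \in \Sigma_h^l$ in the second identity of (\ref{eq:DGLift}) and invoking Remark \ref{LiftingEstimateRemark2} (so that the sum over $K_h^l$ collapses to the two curved triangles forming $w_{e_h}^l$), I obtain the energy identity
\[\norm{\mathbf{L}_{e_h^l}(w)}_{L^{2}(w_{e_h}^l)}^{2} = \int_{e_h^l}[w]\{\mathbf{L}_{e_h^l}(w); n\}\ \mbox{d}s.\]
A Cauchy--Schwarz inequality on $e_h^l$ then splits the right-hand side into the jump factor $\norm{[w]}_{L^{2}(e_h^l)}$ and the average factor $\norm{\{\mathbf{L}_{e_h^l}(w); n\}}_{L^{2}(e_h^l)}$, the latter of which I shall estimate by a trace--inverse inequality in terms of $\norm{\mathbf{L}_{e_h^l}(w)}_{L^{2}(w_{e_h}^l)}$ itself; dividing through by this common factor will produce the result.

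The core of the argument is the trace--inverse estimate for the average term, and this is where all the geometric quantities enter. Since $\mathbf{L}_{e_h^l}(w) \in \Sigma_h^l$ is a \emph{lifted} discrete object rather than a genuine polynomial on $\Gamma$, I would first pass to the planar triangulation $\mathcal{T}_h$. Writing $\mathbf{L}_{e_h^l}(w)$ through its associated $\tau_h \in \Sigma_h$ and using the change-of-measure identities (\ref{eq:changeOfMeasure}), the edge norm on $e_h^l$ becomes an integral over $e_h$ weighted by $\delta_{e_h}$; bounding the transformation relating $\Sigma_h^l$ to $\Sigma_h$ by $\mathbf{F}_h$ (cf.~(\ref{eq:Gamma2GammahGradient})) contributes the factor $\norm{\mathbf{F}_h}_{l^{2},L^{\infty}(e_h)}^{2}$. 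On the planar triangle $K_h$ the object $\tau_h$ is a genuine polynomial, so the standard scaled trace and inverse inequalities apply and yield the factor $C h_{e_h}^{-1}$ without further geometric distortion. Finally I return to the curved patch: the inverse transformation (\ref{eq:Gammah2GammaGradient}), whose norm is $\norm{\mathbf{P}_h(\mathbf{I}-d\mathbf{H})\mathbf{P}}_{l^{2},L^{\infty}(w_{e_h})}$, together with $\delta_h > 1$, bounds the planar volume norm of $\tau_h$ by $\norm{\mathbf{L}_{e_h^l}(w)}_{L^{2}(w_{e_h}^l)}$, giving the factor $\norm{\mathbf{P}_h(\mathbf{I}-d\mathbf{H})\mathbf{P}}_{l^{2},L^{\infty}(w_{e_h})}^{2}$.

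Collecting these contributions, dividing by $\norm{\mathbf{L}_{e_h^l}(w)}_{L^{2}(w_{e_h}^l)}$ and squaring gives
\[\norm{\mathbf{L}_{e_h^l}(w)}_{L^{2}(w_{e_h}^l)}^{2} \leq C\norm{\mathbf{F}_h}_{l^{2},L^{\infty}(e_h)}^{2}\norm{\mathbf{P}_h(\mathbf{I}-d\mathbf{H})\mathbf{P}}_{l^{2},L^{\infty}(w_{e_h})}^{2}\norm{\delta_{e_h}}_{L^{\infty}(e_h)}^{2}\, h_{e_h}^{-1}\norm{[w^{-l}]}_{L^{2}(e_h)}^{2},\]
where one power of $\delta_{e_h}$ arises from the change of measure on the average factor and a second from converting $\norm{[w]}_{L^{2}(e_h^l)}$ into $\norm{[w^{-l}]}_{L^{2}(e_h)}$. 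It remains only to reintroduce the penalty weight: since $\beta_{e_h} = \omega_{e_h}h_{e_h}^{-1}$, I write $h_{e_h}^{-1}\norm{[w^{-l}]}_{L^{2}(e_h)}^{2} = \omega_{e_h}^{-1}\norm{\sqrt{\beta_{e_h}}[w^{-l}]}_{L^{2}(e_h)}^{2}$ and absorb the dependence on $\omega_{e_h}$ into $C_L$, leaving a constant that depends only on the shape-regularity of the mesh and on the bounds for the penalty parameters.

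I anticipate the main obstacle to be the trace--inverse step for the lifted average $\{\mathbf{L}_{e_h^l}(w); n\}$: because $\mathbf{L}_{e_h^l}(w)$ is polynomial only after pulling back through $F_{K_h}$ and $\xi$, one cannot invoke a polynomial inverse inequality directly on $\Gamma$, and care is needed to transfer the estimate to the planar triangles while correctly accounting for the two non-commuting transformations $\mathbf{F}_h$ and $\mathbf{P}_h(\mathbf{I}-d\mathbf{H})\mathbf{P}$, the two conormals $n$ and $n_h$, and the measure factors $\delta_{e_h}$ and $\delta_h$, so that exactly the asserted geometric norms appear and no superfluous powers of $h_{e_h}$ are left over.
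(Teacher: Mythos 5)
Your proposal is correct and takes essentially the same route as the paper: the paper writes $\norm{\mathbf{L}_{e_{h}^{l}}(w)}_{L^{2}(w_{e_{h}}^{l})}$ as a supremum over $\phi \in \Sigma_{h}^{l}(w_{e_{h}}^{l})$ and bounds $\int_{e_{h}^{l}}[w]\{\phi;n\}\,\mbox{d}s$ by a ($\beta_{e_{h}}$-weighted) Cauchy--Schwarz inequality followed by exactly your transfer chain --- $\mathbf{F}_{h}$ and the measure factor $\delta_{e_{h}}$ on the edge, the planar trace/inverse inequality for the genuine polynomial $\tilde{\phi}$ on $\Gamma_{h}$ (with $h_{e_{h}}^{-1}$ absorbed against $\beta_{e_{h}}^{-1}=\omega_{e_{h}}^{-1}h_{e_{h}}$), and then $\mathbf{P}_{h}(\mathbf{I}-d\mathbf{H})\mathbf{P}$ together with $\delta_{h}>1$ to return to the lifted patch. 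Testing the defining relation with $\phi=\mathbf{L}_{e_{h}^{l}}(w)$ itself and dividing by the common factor is an equivalent, purely cosmetic variant of that duality step (the needed trace bound is invoked for one member of $\Sigma_{h}^{l}(w_{e_{h}}^{l})$ rather than all of them), and your bookkeeping --- two total powers of $\norm{\delta_{e_{h}}}_{L^{\infty}(e_{h})}$ in the squared bound and the reintroduction of $\sqrt{\beta_{e_{h}}}$ at the end --- matches the paper's result.
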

\begin{proof}
The proof will follow a similar argument to the one found in \cite{schotzau2003mixed}. Let $\Sigma_{h}^{l}(w_{e_{h}}^l)$ denote the space of all functions in $\Sigma_{h}^l$ restricted to $w_{e_{h}}^l$. For $w \in V_{h}^{l} + H^{1}(\Gamma)$, making use of Remark \ref{LiftingEstimateRemark1} and (\ref{eq:Gamma2GammahGradient}), we have
\begin{align*}
\norm{\mathbf{L}_{e_{h}^{l}}(w)}_{L^{2}(w_{e_{h}}^{l})} &= \sup_{\phi \in \Sigma_{h}^{l}(w_{e_{h}}^{l})} \frac{\int_{w_{e_{h}}^{l}}\mathbf{L}_{e_{h}^{l}}(w) \cdot \phi\ \mbox{d}\sigma}{\norm{\phi}_{L^{2}(w_{e_{h}}^{l})}} = \sup_{\phi \in \Sigma_{h}^{l}(w_{e_{h}}^{l})} \frac{\int_{e_{h}^{l}}[w] \{\phi ; n\}\ \mbox{d}s}{\norm{\phi}_{L^{2}(w_{e_{h}}^{l})}} \\
&\leq \sup_{\phi \in \Sigma_{h}^{l}(w_{e_{h}}^{l})} \frac{\left(\int_{e_{h}^{l}} \delta_{e_{h}}\beta_{e_{h}}|[w]|^{2}\ \mbox{d}s\right)^{\frac{1}{2}}\left(\int_{e_{h}^{l}} \delta_{e_{h}}^{-1} \beta_{e_{h}}^{-1}|\{\phi ; n\}|^{2}\ \mbox{d}s\right)^{\frac{1}{2}}}{\norm{\phi}_{L^{2}(w_{e_{h}}^{l})}} \\
&\leq \sup_{\phi \in \Sigma_{h}^{l}(w_{e_{h}}^{l})} \frac{\left(\int_{e_{h}}\delta_{e_{h}}^2\beta_{e_{h}}|[w^{-l}]|^{2}\ \mbox{d}s_{h}\right)^{\frac{1}{2}}\left(\int_{e_{h}}{\beta_{e_{h}}^{-1}|\mathbf{F}_{h}\tilde{\phi}|^{2}\ \mbox{d}s_{h}}\right)^{\frac{1}{2}}}{\norm{\phi}_{L^{2}(w_{e_{h}}^{l})}}  
\end{align*}
where $\tilde{\phi} \in \Sigma_{h}$. Applying the trace theorem on $\Gamma_{h}$ and lifting back onto $\Gamma$ using (\ref{eq:Gammah2GammaGradient}), we have for all $\phi \in \Sigma_{h}^{l}(w_{e_{h}}^l)$:
\begin{align*}
  \int_{e_{h}}\beta_{e_{h}}^{-1}|\mathbf{F}_{h}\tilde{\phi}|^{2}\ \mbox{d}s_{h} &\leq  \norm{\mathbf{F}_{h}}_{l^2,L^{\infty}(e_{h})}^2\int_{e_{h}}\beta_{e_{h}}^{-1}|\tilde{\phi}|^{2}\ \mbox{d}s_{h} \leq C \norm{\mathbf{F}_{h}}_{l^2,L^{\infty}(e_{h})}^2 \int_{w_{e_h}}\omega_{e_{h}}^{-1}|\tilde{\phi}|^{2}\ \mbox{d}\sigma_{h} \\
  &\leq C \norm{\mathbf{F}_{h}}_{l^2,L^{\infty}(e_{h})}^2 \norm{\mathbf{P}_{h}(\mathbf{I} - d\mathbf{H})\mathbf{P}}_{l^{2},L^{\infty}(w_{e_{h}})}^2\norm{\phi}_{L^{2}(w_{e_{h}}^{l})}^2 
\end{align*}
where we have used that $\delta_{h}^{-1} < 1$. Here $C$ depends on the shape-regularity of the mesh and on the upper bound for the penalty parameters $\omega_{e_h}$ but not on any other essential quantity like
$h$. This provides the desired estimate.
\end{proof}

We can now define a bilinear form on $\Gamma$ which is well-defined in the space $(H^1(\Gamma)+V_{h}^{l}) \times (H^1(\Gamma)+V_{h}^{l})$ by making use of the DG lifting. Let
\begin{align}\label{eq:InteriorPenaltyGammaForm}
A_{\Gamma}^{IP}(v,z) &:= \khlsum{\nabla_{\Gamma}v \cdot \nabla_{\Gamma}z + vz} - \khlsum{\mathbf{L}(v) \cdot \nabla_{\Gamma} z + \mathbf{L}(z) \cdot \nabla_{\Gamma} v} \notag \\
&+ \ehlsum{\beta_{e_{h}^{l}}[v][z]} 
\end{align}
where $\beta_{e_h^l} := \delta_{e_h}^{-1}\beta_{e_h}$. Provided that the penalty parameters $\omega_{e_{h}}$ are large enough, boundedness and stability of $A_{\Gamma}^{IP}$ in $H^{1}(\Gamma) + V_{h}^{l}$ follow from Lemma 3.4 in \cite{dedner2012analysis}. The bilinear form $A_{\Gamma}^{IP}$ is related to the original problem $\left(\mathbf{P}_{\Gamma}\right)$ in the following way: 

\begin{lemma}\label{thm:dualBilinearForm2Rhs}
Let $u_{h} \in V_{h}$ denote the solution to $(\mathbf{P}_{\Gamma_{h}}^{IP})$ and $u_{h}^{l} \in V_{h}^{l}$ its surface lift onto $\Gamma$. Let $z_{h}^{l} \in V_{h}^{l,c} := V_{h}^{l} \cap H^{1}(\Gamma)$. Then we have
\[ A_{\Gamma}^{IP}(u_{h}^{l},z_{h}^{l}) = \khlsum{f z_{h}^{l}} - E_{h}(z_{h}^{l}) \]
where
\begin{align*}
E_{h}(z_{h}^{l}) &:= \sum_{K_{h}^{l} \in \mathcal{T}_{h}^{l}}\int_{K_{h}^{l}}(\mathbf{A}_{h}^{l}-\mathbf{P})\nabla_{\Gamma}u_{h}^{l} \cdot\nabla_{\Gamma}z_{h}^{l}+\left(\delta_{h}^{-1}-1\right)u_{h}^{l}z_{h}^{l} + \left(1-\delta_{h}^{-1}\right)fz_{h}^{l}\ \mbox{d}\sigma \\
&+ \sum_{e_{h}^{l} \in \mathcal{E}_{h}^{l}}\int_{e_{h}^{l}}[u_{h}^{l}]\left( \{\nabla_{\Gamma}z_{h}^{l}; n\} - \left\{\mathbf{P}_{h}^{l}(\mathbf{I}-d\mathbf{H})\mathbf{P}\nabla_{\Gamma}z_{h}^{l}; n_{h}^{l}\right\} \delta_{e_{h}}^{-1} \right)\ \mbox{d}s 
\end{align*}
\end{lemma}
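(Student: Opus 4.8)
The plan is to exploit that the test function $z_h^l$ lies in $H^1(\Gamma)$, transport every integral appearing in $A_\Gamma^{IP}(u_h^l, z_h^l)$ back to the discrete surface $\Gamma_h$ by means of the change-of-measure and gradient identities of Section~\ref{sec:NotationAndSetting}, and then invoke the discrete equation $(\mathbf{P}_{\Gamma_{h}}^{IP})$ to introduce the right-hand side. The discrepancies produced by each transport are precisely the terms collected in $E_h(z_h^l)$.

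First I would simplify $A_\Gamma^{IP}(u_h^l, z_h^l)$ in (\ref{eq:InteriorPenaltyGammaForm}). Since $z_h^l \in H^1(\Gamma)$, Remark~\ref{LiftingEstimateRemark1} gives $\mathbf{L}(z_h^l) = 0$, and continuity of $z_h^l$ across the curved edges yields $[z_h^l] = 0$; hence both the term $\mathbf{L}(z_h^l)\cdot\nabla_\Gamma u_h^l$ and the penalty term drop out, leaving only the two volume terms and $-\sum_{K_h^l}\int_{K_h^l}\mathbf{L}(u_h^l)\cdot\nabla_\Gamma z_h^l$. Because $z_h^l \in V_h^l$, its tangential gradient $\nabla_\Gamma z_h^l$ is an admissible test field in $\Sigma_h^l$ — this is exactly what the definitions of $V_h$, $\Sigma_h$ and their lifts were arranged to guarantee, via (\ref{eq:Gamma2GammahGradient}) — so the defining relation (\ref{eq:DGLift}) of the lifting operator rewrites this term as the single edge sum $\sum_{e_h^l}\int_{e_h^l}[u_h^l]\{\nabla_\Gamma z_h^l; n\}\, ds$.

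Next I would transport the three remaining contributions. For the gradient term I use the identity $\int_\Gamma \mathbf{A}_h^l\nabla_\Gamma u_h^l\cdot\nabla_\Gamma z_h^l\, d\sigma = \int_{\Gamma_h}\nabla_{\Gamma_h}u_h\cdot\nabla_{\Gamma_h}z_h\, d\sigma_h$ stated after (\ref{eq:Ah}), writing $\nabla_\Gamma u_h^l = \mathbf{P}\nabla_\Gamma u_h^l$ and splitting $\mathbf{P} = \mathbf{A}_h^l + (\mathbf{P}-\mathbf{A}_h^l)$; this reproduces the discrete term plus the geometric residual $-\int_\Gamma(\mathbf{A}_h^l - \mathbf{P})\nabla_\Gamma u_h^l\cdot\nabla_\Gamma z_h^l\, d\sigma$. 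For the mass term I apply the change of measure $d\sigma = \delta_h\, d\sigma_h$ from (\ref{eq:changeOfMeasure}) and add and subtract, so that $\int_\Gamma u_h^l z_h^l\, d\sigma = \int_{\Gamma_h}u_h z_h\, d\sigma_h - \int_\Gamma(\delta_h^{-1}-1)u_h^l z_h^l\, d\sigma$. For the edge sum I transport via $ds = \delta_{e_h}\, ds_h$ together with (\ref{eq:Gammah2GammaGradient}), which identifies $\mathbf{P}_h^l(\mathbf{I}-d\mathbf{H})\mathbf{P}\nabla_\Gamma z_h^l$ as the lift of $\nabla_{\Gamma_h}z_h$; adding and subtracting $\{\mathbf{P}_h^l(\mathbf{I}-d\mathbf{H})\mathbf{P}\nabla_\Gamma z_h^l; n_h^l\}\delta_{e_h}^{-1}$ produces exactly the discrete edge term $\sum_{e_h}\int_{e_h}[u_h]\{\nabla_{\Gamma_h}z_h; n_h\}\, ds_h$ together with the geometric edge residual of $E_h$.

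Collecting the transported pieces, the discrete gradient, mass and edge terms assemble to $a_{\Gamma_h}^{IP}(u_h, z_h)$ (the remaining jump term in $a_{\Gamma_h}^{IP}$ vanishes since $[z_h]=0$), which by $(\mathbf{P}_{\Gamma_{h}}^{IP})$ equals $\sum_{K_h}\int_{K_h}f_h z_h\, d\sigma_h$. A final change of measure with $f_h^l = f$ from (\ref{eq:rel_f_fh}) gives $\sum_{K_h}\int_{K_h}f_h z_h\, d\sigma_h = \int_\Gamma f z_h^l\, d\sigma - \int_\Gamma(1-\delta_h^{-1})f z_h^l\, d\sigma$, and gathering the three geometric residuals yields the stated identity. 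I expect the main obstacle to be the edge computation: on $\Gamma$ the surface conormals satisfy $n^+ = -n^-$ whereas on $\Gamma_h$ in general $n_h^+ \neq -n_h^-$, so the curved and flat averages $\{\cdot; n\}$ and $\{\cdot; n_h\}$ are genuinely different objects, and one must track the measure factor $\delta_{e_h}$ and the gradient map $\mathbf{P}_h(\mathbf{I}-d\mathbf{H})\mathbf{P}$ carefully to see that their mismatch is captured exactly by the geometric edge residual; the volume transports, by contrast, are routine once the $\mathbf{A}_h^l$ identity and $\delta_h$ are invoked.
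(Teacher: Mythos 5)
Your proof is correct, but it takes a different route from the paper's. The paper's own proof is a two-line modular argument: it writes $A_{\Gamma}^{IP}(u_{h}^{l},z_{h}^{l}) = A_{\Gamma}^{IP}(u_{h}^{l}-u,z_{h}^{l}) + A_{\Gamma}^{IP}(u,z_{h}^{l})$, uses consistency of $A_{\Gamma}^{IP}$ on $H^{1}(\Gamma)\times H^{1}(\Gamma)$ (Remark~\ref{LiftingEstimateRemark1} plus the weak problem (\ref{eq:weakH1})) to identify the second term with $\khlsum{fz_{h}^{l}}$, and then cites Lemma~4.2 of \cite{dedner2012analysis} for the identity $A_{\Gamma}^{IP}(u-u_{h}^{l},z_{h}^{l}) = E_{h}(z_{h}^{l})$, so the exact solution $u$ and the continuous problem carry the whole argument. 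You instead never invoke $u$ or (\ref{eq:weakH1}): you transport every term of the lifted bilinear form to $\Gamma_{h}$ via (\ref{eq:Ah}), (\ref{eq:changeOfMeasure}), (\ref{eq:Gammah2GammaGradient})--(\ref{eq:Gamma2GammahGradient}), assemble the discrete pieces into $a_{\Gamma_{h}}^{IP}(u_{h},z_{h})$, and close with the discrete equation (\ref{eq:InteriorPenaltyGammah}) and $f_{h}^{l}=f$ from (\ref{eq:rel_f_fh}); I checked the add-and-subtract bookkeeping for the gradient, mass, edge and right-hand-side terms and the signs match $E_{h}$ exactly. In effect you have reconstructed, inline, the content of the cited Lemma~4.2, which is what a blind proof must do; the paper's version buys brevity and reuse of the a priori analysis, while yours is self-contained and makes visible precisely which transport step generates each term of $E_{h}$ (in particular that the edge residual is the mismatch between the curved average $\{\cdot;n\}$ and the flat average $\{\cdot;n_{h}\}$ weighted by $\delta_{e_{h}}^{-1}$, where indeed $n_{h}^{+}\neq -n_{h}^{-}$). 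The one step you rightly flagged as needing justification --- that $\nabla_{\Gamma}z_{h}^{l}$ is an admissible test field for the lifting identity (\ref{eq:DGLift}) --- does hold: on each element $\nabla_{\Gamma_{h}}z_{h}|_{K_{h}} = \nabla F_{K_{h}}^{-T}\bigl(\hat{\nabla}\hat{v}\circ F_{K_{h}}^{-1}\bigr)$ with $\hat{\nabla}\hat{v}\in[\mathbb{P}^{k-1}(\hat{K})]^{2}\subset[\mathbb{P}^{k}(\hat{K})]^{2}$, so $\nabla_{\Gamma_{h}}z_{h}\in\Sigma_{h}$, and the definition of $\Sigma_{h}^{l}$ together with (\ref{eq:Gamma2GammahGradient}) identifies its lift with $\nabla_{\Gamma}z_{h}^{l}\in\Sigma_{h}^{l}$, so no gap remains.
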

\begin{proof}
We notice that $A_{\Gamma}^{IP}(u_{h}^{l},z_{h}^{l}) = A_{\Gamma}^{IP}(u_{h}^{l} - u,z_{h}^{l}) + A_{\Gamma}^{IP}(u,z_{h}^{l})$. Since $u, z_{h}^{l} \in H^{1}(\Gamma)$ we have that $A_{\Gamma}^{IP}(u,z_{h}^{l}) = a_{\Gamma}(u,z_{h}^{l}) = \khlsum{f z_{h}^{l}}$ by Remark \ref{LiftingEstimateRemark1} and (\ref{eq:weakH1}). Also, we have that $A_{\Gamma}^{IP}(u - u_{h}^{l},z_{h}^{l}) = E_{h}(z_{h}^{l})$ by Lemma 4.2 in \cite{dedner2012analysis}.
\end{proof} 

\subsection{Residual equation}
In order to derive the residual equation, we consider the following dual problem: find $z \in H^{1}(\Gamma)$ such that   
\begin{align}\label{eq:DualProblem}
A_{\Gamma}^{IP}(v,z) = J(v)\ \forall v \in H^{1}(\Gamma). 
\end{align}
In a similar fashion to \cite{houston2007energy}, we decompose the error $\mbox{e}_{h} := u - u_{h}^{l}$ using $u_{h}^{l} = u_{h}^{l,c} + u_{h}^{l,\perp}$ with $u_{h}^{l,c} \in V_{h}^{l,c}$ and $u_{h}^{l,\perp} \in V_{h}^{l,\perp}$ where $V_{h}^{l,\perp}$ denotes the orthogonal complement in $V_{h}^{l}$ of $V_{h}^{l,c}$ with respect to the DG norm. Thus $\mbox{e}_{h}^{c} = u - u_{h}^{l,c} \in H^{1}(\Gamma)$. Let $z_{h}^{l} \in V_{h}^{l,c}$, from the dual problem (\ref{eq:DualProblem}) we have
\begin{align*}
J(\mbox{e}_{h}) &= J(\mbox{e}_{h}^{c}) - J(u_{h}^{l,\perp}) 
  = A_{\Gamma}^{IP}(\mbox{e}_{h}^{c},z) - J(u_{h}^{l,\perp}) \\ 
 &= A_{\Gamma}^{IP}(\mbox{e}_{h},z) + A_{\Gamma}^{IP}(u_{h}^{l,\perp},z) - J(u_{h}^{l,\perp}) \\
 &= A_{\Gamma}^{IP}(u,z) - A_{\Gamma}^{IP}(u_{h}^{l},z-z_{h}^{l}) 
    - A_{\Gamma}^{IP}(u_{h}^{l},z_{h}^{l}) 
    + A_{\Gamma}^{IP}(u_{h}^{l,\perp},z) - J(u_{h}^{l,\perp}) 
\end{align*}
Using the fact that $A_{\Gamma}^{IP}(u,z) = a(u,z)$ (by Remark \ref{LiftingEstimateRemark1}), (\ref{eq:weakH1}) and Lemma \ref{thm:dualBilinearForm2Rhs}, we get
\begin{align*}
J(e_{h}) &= \khlsum{f(z-z_{h}^{l})} - A_{\Gamma}^{IP}(u_{h}^{l},z-z_{h}^{l}) + A_{\Gamma}^{IP}(u_{h}^{l,\perp},z) - J(u_{h}^{l,\perp}) + E_{h}(z_{h}^{l}).
\end{align*}
Using the fact that $z - z_{h}^{l} \in H^{1}(\Gamma)$ so that $[z-z_{h}^{l}] = 0$ holds, we have
\begin{align*}
J(e_{h}) =  &\khlsum{f(z-z_{h}^{l})} - \khlsum{\nabla_{\Gamma}u_{h}^{l} \cdot \nabla_{\Gamma}(z-z_{h}^{l}) + u_{h}^{l}(z-z_{h}^{l})}\\ 
&+ \khlsum{\mathbf{L}(u_{h}^{l}) \cdot \nabla_{\Gamma} (z-z_{h}^{l})} + A_{\Gamma}^{IP}(u_{h}^{l,\perp},z) - J(u_{h}^{l,\perp}) + E_{h}(z_{h}^{l}).
\end{align*}
Moving the first two integrals in the above onto $\Gamma_{h}$ and integrating by parts, we get
\begin{align*}
J(e_{h}) =  & \sum_{K_{h} \in \mathcal{T}_{h}}\left(\int_{K_h}(f_{h}\delta_{h} + \Delta_{\Gamma_{h}}u_{h} - u_{h}\delta_{h})(z^{-l}-z_{h})\ \mbox{d}\sigma_{h} - \int_{\partial K_{h}}\nabla_{\Gamma_{h}}u_{h} \cdot n_{K_{h}}(z^{-l} - z_{h})\ \mbox{d}s_{h} \right) \notag \\
&- \khlsum{(\mathbf{P}-\mathbf{A}_{h}^{l})\nabla_{\Gamma}u_{h}^{l} \cdot \nabla_{\Gamma}z} + \khlsum{\left(\delta_{h}^{-1}-1\right)(u_{h}^{l} - f)z_{h}^{l}} \notag \\ 
&+ \khlsum{\mathbf{L}(u_{h}^{l}) \cdot \nabla_{\Gamma} (z-z_{h}^{l})} \\
&+ \sum_{e_{h}^{l} \in \mathcal{E}_{h}^{l}}\int_{e_{h}^{l}}[u_{h}^{l}]\left( \{\nabla_{\Gamma}z_{h}^{l}; n\} - \left\{\mathbf{P}_{h}^{l}(\mathbf{I}-d\mathbf{H})\mathbf{P}\nabla_{\Gamma}z_{h}^{l}; n_{h}^{l} \right\}\delta_{e_{h}}^{-1}\right)\ \mbox{d}s + A_{\Gamma}^{IP}(u_{h}^{l,\perp},z) - J(u_{h}^{l,\perp}).
\end{align*}
We now wish to move all the terms in the above onto the discrete surface. Making use of (\ref{eq:Gamma2GammahGradient}), we have the following:
\begin{align*}
- \sum_{K_{h}^{l} \in \mathcal{T}_{h}^{l}}\int_{K_{h}^{l}}(\mathbf{P}-\mathbf{A}_{h}^{l})\nabla_{\Gamma}u_{h}^{l} \cdot \nabla_{\Gamma} z\ \mbox{d}s
&= -\sum_{K_{h} \in \mathcal{T}_{h}} \int_{K_{h}}\delta_{e_{h}}\mathbf{F}_{h}^{T}(\mathbf{P}^{-l} - \mathbf{A}_{h})\mathbf{F}_{h}\nabla_{\Gamma_{h}}u_{h} \cdot \nabla_{\Gamma_{h}} z^{-l}\ \mbox{d}s_{h}
\end{align*}
and
\begin{align*}
&\khlsum{\mathbf{L}(u_{h}^{l}) \cdot \nabla_{\Gamma} (z-z_{h}^{l})} = \khsum{\mathbf{L}^{-l}(u_{h}^{l})\cdot \delta_{h} \mathbf{F}_{h}\nabla_{\Gamma_{h}}(z^{-l} - z_{h})}.
\end{align*} 
Furthermore, making use of the fact that $\mathbf{P}_{h}^{l+/-}n_{h}^{l+/-} = n_{h}^{l+/-}$ on each $e_{h}^{l} \in \mathcal{E}_{h}^{l}$ and $\mathbf{H} \mathbf{P} = \mathbf{H}$, we have
\begin{align*} 
&\sum_{e_{h}^{l} \in \mathcal{E}_{h}^{l}}\int_{e_{h}^{l}}[u_{h}^{l}]\left(\left\{\nabla_{\Gamma}z_{h}^{l}; n \right\} - \left\{\mathbf{P}_{h}^{l}(\mathbf{I}-d\mathbf{H})\mathbf{P}\nabla_{\Gamma}z_{h}^{l}; n_{h}^{l}\right\} \delta_{e_{h}}^{-1}\right)\ \mbox{d}s \\
&= \sum_{e_{h}^{l} \in \mathcal{E}_{h}^{l}}\int_{e_{h}^{l}}[u_{h}^{l}]\left(\left\{\nabla_{\Gamma}z_{h}^{l}; n \right\} - \left\{\nabla_{\Gamma}z_{h}^{l}; \delta_{e_{h}}^{-1}\mathbf{P}(\mathbf{I}-d\mathbf{H})n_{h}^{l}\right\} \right)\ \mbox{d}s \\
&= \sum_{e_{h}^{l} \in \mathcal{E}_{h}^{l}}\int_{e_{h}^{l}}[u_{h}^{l}]\left(\left\{\nabla_{\Gamma}z_{h}^{l}; ( n - \delta_{e_{h}}^{-1}\mathbf{P} n_{h}^{l}) \right\} + \delta_{e_{h}}^{-1} d \left\{\nabla_{\Gamma}z_{h}^{l}; \mathbf{H} n_{h}^{l}\right\} \right)\ \mbox{d}s \\
&= \sum_{e_{h} \in \mathcal{E}_{h}}\int_{e_{h}}[u_{h}]\left(\left\{\mathbf{F}_{h} \nabla_{\Gamma_{h}}z_{h}; ( \delta_{e_{h}} n^{-l} - \mathbf{P}^{-l} n_{h}) \right\} + d \left\{\mathbf{F}_{h} \nabla_{\Gamma_{h}}z_{h}; \mathbf{H} n_{h}\right\} \right)\ \mbox{d}s_{h}.
\end{align*}
Making use of the above and writing all terms as element-wise computations, we derive the following residual equation:
\begin{align} \label{eq:apostderivation}
J(e_{h}) =  &\underbrace{\khsum{(f_{h}\delta_{h} + \Delta_{\Gamma_{h}}u_{h} - u_{h}\delta_{h})(z^{-l}-z_{h})}}_{I}  \underbrace{- \frac{1}{2} \sum_{K_{h} \in \mathcal{T}_{h}}\int_{\partial K_{h}}[\nabla_{\Gamma_{h}}u_{h} ; n_{h} ](z^{-l} - z_{h})\ \mbox{d}s_{h}}_{II} \notag \\
&\underbrace{- \khsum{\delta_{e_h}\mathbf{F}_{h}^{T}(\mathbf{P}^{-l}-\mathbf{A}_{h})\mathbf{F}_{h} \nabla_{\Gamma_{h}}u_{h} \cdot \nabla_{\Gamma_{h}}z^{-l}}}_{III} + \underbrace{\khsum{(1-\delta_{h})(u_{h} - f_{h})z_{h}}}_{IV} \notag \\ 
&+ \underbrace{\khsum{\mathbf{L}^{-l}(u_{h}^{l})\cdot \delta_{h}\mathbf{F}_{h}\nabla_{\Gamma_{h}}(z^{-l} - z_{h})}}_{V} \notag \\
&+ \underbrace{\sum_{K_{h} \in \mathcal{T}_{h}}\frac{1}{2}\int_{\partial K_{h}}[u_{h}]\left(\left\{\mathbf{F}_{h} \nabla_{\Gamma_{h}}z_{h}; ( \delta_{e_{h}} n^{-l} - \mathbf{P}^{-l} n_{h}) \right\} + d \left\{\mathbf{F}_{h} \nabla_{\Gamma_{h}}z_{h}; \mathbf{H} n_{h}\right\} \right)\ \mbox{d}s_{h}}_{VI} \notag \\ 
&+ \underbrace{A_{\Gamma}^{IP}(u_{h}^{l,\perp},z) - J(u_{h}^{l,\perp})}_{VII}.
\end{align}
\begin{remark}
With the exception of term $VII$, the residual equation (\ref{eq:apostderivation}) is fully computable and can be used to estimate an arbitrary bounded linear functional $J$ in $H^{1} + V_{h}^{l}$ of the error $e_{h}$ with high accuracy. In practice, we may deal with term $VII$ by bounding it as in (\ref{eq:boundForVII}) (making use of a suitable stability estimate for the dual solution $z$). The main drawback of performing error estimation based on (\ref{eq:apostderivation}) is that it requires approximating the weights $z^{-l}-z_{h}$, which typically involves finding an approximation of the solution $z$ to the dual problem (\ref{eq:DualProblem}) and thus requires an additional solve step at each iteration. From here on we will only focus on deriving estimates in the energy norm and will do so by bounding all of the terms in the residual equation (\ref{eq:apostderivation}), including the weights $z^{-l}-z_{h}$.                
\end{remark}

\section{A posteriori upper bound (reliability)}
In this section we derive a reliable estimator for the error in the energy norm.
\begin{theorem}\label{thm:Reliability}
Suppose that $\mathcal{T}_{h}$ is shape-regular and let
\begin{align}
\eta_{K_{h}} = h_{K_{h}} \norm{f_{h}\delta_{h} + \Delta_{\Gamma_{h}}u_{h} - u_{h}\delta_{h}}_{L^{2}(K_{h})} + h_{K_{h}}^{1/2} \norm{[\nabla_{\Gamma_{h}}u_{h} ; n_{h} ]}_{L^{2}(\partial K_{h})}
\end{align}
be the sum of the scaled element and jump residuals, then
\begin{align*} 
\norm{u - u_{h}^{l}}_{DG(\Gamma)} \leq  C \left( \sum_{K_{h} \in \mathcal{T}_{h}}\mathcal{R}_{K_{h}}^{2} + \mathcal{R}_{DG_{K_{h}}}^{2} + \mathcal{G}_{K_{h}}^{2} + \mathcal{G}_{DG_{K_{h}}}^{2} \right)^{\frac{1}{2}} 
\end{align*}
with
\begin{align}\label{eq:ResidualTerm}
\mathcal{R}_{K_{h}}^{2} := \norm{\mathbf{A}_{h}}_{l^{2},L^{\infty}(w_{K_{h}})}\eta_{K_{h}}^{2},   
\end{align}
\begin{align}\label{eq:DGTerm}
&\mathcal{R}_{DG_{K_{h}}}^{2} := \Big(1 + \norm{\mathbf{F}_{h}}_{l^{2},L^{\infty}(\partial K_{h})}^{2}\norm{\mathbf{P}_{h}(\mathbf{I} - d\mathbf{H})\mathbf{P}}_{l^{2},L^{\infty}(w_{K_{h}})}^2 \norm{\delta_{e_{h}}}_{L^{\infty}(\partial K_{h})}^2 \notag \\ 
&+ \norm{\mathbf{A}_{h}}_{l^{2},L^{\infty}(w_{K_{h}})} \norm{\delta_{h}}_{L^{\infty}(w_{K_{h}})}\norm{\mathbf{F}_{h}}_{l^{2},L^{\infty}(\partial K_{h})}^{2}\norm{\mathbf{P}_{h}(\mathbf{I} - d\mathbf{H})\mathbf{P}}_{l^{2},L^{\infty}(w_{K_{h}})}^2 \norm{\delta_{e_{h}}}_{L^{\infty}(\partial K_{h})}^2\Big)\norm{\sqrt{\beta_{e_{h}}}[u_{h}]}_{L^{2}(\partial K_{h})}^{2},
\end{align}
\begin{align}\label{eq:GeometricErrorTerm}
\mathcal{G}_{K_{h}}^{2} := &\norm{\mathbf{B}_{h}\nabla_{\Gamma_{h}}u_{h}}_{L^{2}(K_{h})}^{2} + \norm{(1-\delta_{h})(u_{h} - f_{h})}_{L^{2}(K_{h})}^{2}, 
\end{align}
\begin{align}\label{eq:DGGeometricErrorTerm}
\mathcal{G}_{DG_{K_{h}}}^{2} := \norm{\delta_{h}}_{L^{\infty}(w_{K_{h}})} h_{K_{h}}^{-2}\Bigg(&\norm{[u_{h}] \left\{ |  \left(\mathbf{F}_{h}\mathbf{P}_{h}\right)^{T}( \delta_{e_{h}} n^{-l} - \mathbf{P}^{-l} n_{h}) | \right\} }_{L^{2}(\partial K_{h})}^2 \notag
\\ 
&+ \norm{d [u_{h}] \left\{ | \left(\mathbf{F}_{h}\mathbf{P}_{h}\right)^{T} \mathbf{H} n_{h} | \right\}}_{L^{2}(\partial K_{h})}^2 \Bigg), 
\end{align}
where $C$ depends only on the shape regularity of the mesh and
$w_{K_{h}} = \bigcup_{p \in K_{h}}w_{p}$.
The operators $\mathbf{A}_h,\mathbf{B}_h$ are defined in 
(\ref{eq:Ah}) and (\ref{eq:Bh}), respectively.
\end{theorem}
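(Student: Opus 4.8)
The plan is to turn the computable residual identity (\ref{eq:apostderivation}) into a norm bound by a standard duality argument, choosing the functional $J$ so that it recovers the DG norm of the error. Since the $DG(\Gamma)$ norm on $H^{1}(\Gamma)+V_{h}^{l}$ is induced by an inner product, I would first take $J$ to be the Riesz/Hahn--Banach functional representing $e_{h}=u-u_{h}^{l}$, normalised so that $\norm{J}=1$ and $J(e_{h})=\norm{u-u_{h}^{l}}_{DG(\Gamma)}$. For this $J$ the dual problem (\ref{eq:DualProblem}), restricted to $H^{1}(\Gamma)$, reduces to the coercive primal form $a_{\Gamma}(v,z)=J(v)$ (the lifting operators and jumps vanish on $H^{1}(\Gamma)$ by Remark \ref{LiftingEstimateRemark1}), so it is well posed and its solution satisfies the a priori bound $\norm{z}_{DG(\Gamma)}=\norm{z}_{H^{1}(\Gamma)}\leq C$. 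It then suffices to bound each of the terms $I$--$VII$ on the right of (\ref{eq:apostderivation}) by the claimed estimator times $\norm{z}_{DG(\Gamma)}$, taking the discrete weight to be the lifted Cl\'ement interpolant $z_{h}=I_{h}z^{-l}$ from (\ref{eq:ClementInterpolant}); since $J(e_{h})=\norm{u-u_{h}^{l}}_{DG(\Gamma)}$ and $\norm{z}_{DG(\Gamma)}\le C$, the theorem follows directly with no normalising division.

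The residual terms $I$ and $II$ are treated by element- and edge-wise Cauchy--Schwarz. For $I$ I would extract $\norm{f_{h}\delta_{h}+\Delta_{\Gamma_{h}}u_{h}-u_{h}\delta_{h}}_{L^{2}(K_{h})}$ and bound the weight $\norm{z^{-l}-z_{h}}_{L^{2}(K_{h})}$ by the patch estimate (\ref{eq:PoicareInequalityL2PatchShapeRegular}); for $II$, after recasting the boundary sum as the edge jump $[\nabla_{\Gamma_{h}}u_{h};n_{h}]$, I would use the edge estimate (\ref{eq:PoicareInequalityL2edgeShapeRegular}). In each case the factor $h_{p}\norm{\mathbf{A}_{h}}^{1/2}_{l^{2},L^{\infty}(w_{p})}$ produced by Lemma \ref{thm:PoicareInequalityShapeRegular} combines with the scaled residual $\eta_{K_{h}}$ to yield exactly the indicator $\mathcal{R}_{K_{h}}^{2}$ of (\ref{eq:ResidualTerm}), while the leftover weight $\norm{\nabla_{\Gamma}z}_{L^{2}(w_{K_{h}}^{l})}$ is summed by Cauchy--Schwarz and, using shape regularity to control the finite overlap of the patches $w_{K_{h}}$, absorbed into $\norm{z}_{DG(\Gamma)}\le C$.

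The geometric terms are handled similarly. For $III$ I would identify the combination $\sqrt{\delta_{h}}(\mathbf{P}-\mathbf{A}_{h})\mathbf{F}_{h}=\mathbf{B}_{h}$ from (\ref{eq:Bh}) together with the gradient identity (\ref{eq:Gamma2GammahGradient}), so that Cauchy--Schwarz produces $\norm{\mathbf{B}_{h}\nabla_{\Gamma_{h}}u_{h}}_{L^{2}(K_{h})}$ and, with $IV$, the geometric indicator $\mathcal{G}_{K_{h}}^{2}$ of (\ref{eq:GeometricErrorTerm}). For the DG-lifting term $V$ I would invoke the stability estimate of Lemma \ref{LiftingEstimate}, summed over edges via Remark \ref{LiftingEstimateRemark2}, to control $\mathbf{L}^{-l}(u_{h}^{l})$ by $\norm{\sqrt{\beta_{e_{h}}}[u_{h}]}$, bounding the weight $\delta_{h}\mathbf{F}_{h}\nabla_{\Gamma_{h}}(z^{-l}-z_{h})$ by $H^{1}$-stability of the Cl\'ement interpolant and absorbing it into $\norm{z}_{DG(\Gamma)}$; this reproduces the geometric prefactors in $\mathcal{R}_{DG_{K_{h}}}^{2}$. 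The edge term $VI$ is bounded edge-wise using a trace/inverse inequality applied to $z_{h}$, the small geometric defects $(\delta_{e_{h}}n^{-l}-\mathbf{P}^{-l}n_{h})$ and $d\,\mathbf{H}n_{h}$ giving precisely the two contributions of $\mathcal{G}_{DG_{K_{h}}}^{2}$ in (\ref{eq:DGGeometricErrorTerm}).

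It remains to control the non-conforming term $VII=A_{\Gamma}^{IP}(u_{h}^{l,\perp},z)-J(u_{h}^{l,\perp})$, and this is where I expect the main obstacle. Boundedness of $A_{\Gamma}^{IP}$ and $\norm{J}=1$ reduce it to $C\norm{u_{h}^{l,\perp}}_{DG(\Gamma)}\norm{z}_{DG(\Gamma)}$, so the crux is the approximation bound $\norm{u_{h}^{l,\perp}}_{DG(\Gamma)}\leq C\big(\sum_{e_{h}\in\mathcal{E}_{h}}\norm{\sqrt{\beta_{e_{h}}}[u_{h}]}_{L^{2}(e_{h})}^{2}\big)^{1/2}$, i.e.\ that the $DG$-distance from $u_{h}^{l}$ to the conforming subspace $V_{h}^{l,c}$ is dominated by the jump seminorm. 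Establishing this on the lifted, curved triangulation $\mathcal{T}_{h}^{l}$ rather than a flat conforming mesh is the delicate point: it requires an Oswald/averaging-type interpolant mapping $V_{h}^{l}$ into $V_{h}^{l,c}$ whose error is controlled by the jumps, with constants uniform in $h$ and carrying the geometric factors $\mathbf{F}_{h}$, $\mathbf{P}_{h}(\mathbf{I}-d\mathbf{H})\mathbf{P}$ and $\delta_{e_{h}}$ that appear in $\mathcal{R}_{DG_{K_{h}}}^{2}$. Once this is secured, collecting the term-wise bounds, using $\norm{z}_{DG(\Gamma)}\le C$ and the finite patch overlap, and invoking $J(e_{h})=\norm{u-u_{h}^{l}}_{DG(\Gamma)}$ completes the proof.
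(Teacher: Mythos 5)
Your proposal reproduces the paper's proof essentially step for step: the same normalised Riesz-type functional $J$ (so that $J(e_{h})=\norm{e_{h}}_{DG(\Gamma)}$ and the dual solution satisfies $\norm{z}_{H^{1}(\Gamma)}\leq 1$), the same Cl\'ement weight $z_{h}=I_{h}z^{-l}$ with Lemma \ref{thm:PoicareInequalityShapeRegular} for terms $I$--$II$, the same identification of $\mathbf{B}_{h}$ for $III$, and Lemma \ref{LiftingEstimate} with Remark \ref{LiftingEstimateRemark2} for $V$. The one step you flag as the ``main obstacle'' --- bounding $\norm{u_{h}^{l,\perp}}_{DG(\Gamma)}$ by the jump seminorm on the lifted triangulation --- is exactly the norm equivalence on $V_{h}^{l,\perp}$ that the paper states as Lemma \ref{thm:EquivalenceResult} (citing Theorem 2.2 of \cite{karakashian2003posteriori}, which is proved by the Oswald-type averaging argument you describe), so your route coincides with the paper's.
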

The proof of Theorem \ref{thm:Reliability} will require the following norm equivalence result:
\begin{lemma}\label{thm:EquivalenceResult}
Assuming that $\omega_{e_{h}}$ is sufficiently large, the expression
\[v_{h}^{l,\perp} \rightarrow \left(\ehlsum{\beta_{e_{h}^{l}}|[v_{h}^{l, \perp}]|^{2}} \right)^{1/2}  \]
is a norm on $V_{h}^{l,\perp}$. This norm is equivalent to the norm $\norm{\cdot}_{DG(\Gamma)}$ and there is a constant $C_{\perp}$ such that 
\[\norm{v_{h}^{l, \perp}}_{DG(\Gamma)} \leq C_{\perp}\left(\ehlsum{\beta_{e_{h}^{l}}|[v_{h}^{l, \perp}]|^{2}} \right)^{1/2} \leq C_{\perp} \norm{v_{h}^{l, \perp}}_{DG(\Gamma)} \]
for all $v_{h}^{l, \perp} \in V_{h}^{l,\perp}$. The constant $C_{\perp}$ is independent of $h$ and depends on the shape-regularity of the mesh. 
\end{lemma}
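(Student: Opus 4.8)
The plan is to use that the $DG(\Gamma)$ norm is a genuine Hilbert norm—both $|\cdot|_{1,h}^2$ and $|\cdot|_{*,h}^2$ are quadratic forms induced by inner products—so that $V_h^l = V_h^{l,c}\oplus V_h^{l,\perp}$ is an orthogonal decomposition, and to couple this orthogonality with a conforming-approximation estimate. Write $|v|_J^2 := \ehlsum{\beta_{e_h^l}|[v]|^2}$ for the penalty-weighted jump functional. The right-hand inequality $|v_h^{l,\perp}|_J \leq C_\perp\norm{v_h^{l,\perp}}_{DG(\Gamma)}$ is the routine direction: since $\beta_{e_h^l} = \delta_{e_h}^{-1}\beta_{e_h}$ with $\delta_{e_h}>1$ and the $\omega_{e_h}$ uniformly bounded above, $\beta_{e_h^l}$ is comparable to $h_{e_h^l}^{-1}$ up to factors that are $O(1)$ in $h$, so $|v|_J^2$ is dominated by the jump seminorm $|v|_{*,h}^2$ appearing in the definition of $\norm{\cdot}_{DG(\Gamma)}$, and hence by the full norm.

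The content lies in the left-hand inequality. First I would record the surface analogue of the standard conforming-approximation (averaging) result used on flat meshes (cf. the operators of \cite{houston2007energy}): for every $v_h\in V_h^l$ there exists $w^c\in V_h^{l,c}$ with
\[\norm{v_h - w^c}_{DG(\Gamma)}^2 \leq C\,\ehlsum{\beta_{e_h^l}|[v_h]|^2},\]
the constant $C$ depending only on shape-regularity. On the curved triangulation this is obtained by transporting the planar averaging estimate through the lift, using that the geometric distortions $\delta_h$, $\delta_{e_h}$ and the map $\mathbf{F}_h$ are bounded above and below independently of $h$, so the lifted quadratic forms are uniformly equivalent to their flat counterparts.

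With this estimate the claim follows by orthogonality. Applying it to $v_h = v_h^{l,\perp}$ and using that the jumps of the continuous function $w^c$ vanish gives $[v_h^{l,\perp}-w^c]=[v_h^{l,\perp}]$, so the right-hand side above is $C|v_h^{l,\perp}|_J^2$. Since $v_h^{l,\perp}\in V_h^{l,\perp}$ is $DG(\Gamma)$-orthogonal to $w^c\in V_h^{l,c}$, Pythagoras yields
\[\norm{v_h^{l,\perp}-w^c}_{DG(\Gamma)}^2 = \norm{v_h^{l,\perp}}_{DG(\Gamma)}^2 + \norm{w^c}_{DG(\Gamma)}^2 \geq \norm{v_h^{l,\perp}}_{DG(\Gamma)}^2,\]
and chaining the two displays produces $\norm{v_h^{l,\perp}}_{DG(\Gamma)}^2 \leq C|v_h^{l,\perp}|_J^2$, i.e.\ the left-hand bound with $C_\perp=\sqrt{C}$. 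The norm property then comes for free: homogeneity and the triangle inequality are inherited from the $L^2$-type form $|\cdot|_J$, while definiteness is exactly this lower bound, since $|v_h^{l,\perp}|_J=0$ forces $\norm{v_h^{l,\perp}}_{DG(\Gamma)}=0$ and hence $v_h^{l,\perp}=0$. The hypothesis that $\omega_{e_h}$ be sufficiently large (in particular bounded below away from zero) is what allows the averaging constant $C$ to be absorbed into the penalty-weighted jumps, while its uniform upper bound secures the easy direction.

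I expect the main obstacle to be the conforming-approximation estimate on the lifted mesh rather than the norm-equivalence bookkeeping: one must verify that the averaging operator constructed on $\mathcal{T}_h^l$ retains the correct $h$-scaling once the geometric factors $\mathbf{F}_h$, $\delta_h$ and $\delta_{e_h}$ are carried along, which is precisely where shape-regularity and the uniform two-sided bounds on these geometric quantities are genuinely needed.
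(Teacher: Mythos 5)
Your argument is correct and is essentially the paper's own proof: the paper simply cites Theorem 2.2 of \cite{karakashian2003posteriori}, whose proof is precisely your combination of a conforming averaging approximation bounded by the penalty-weighted jumps with Pythagoras in the $DG(\Gamma)$ inner product on the orthogonal decomposition $V_h^l = V_h^{l,c}\oplus V_h^{l,\perp}$. Your additional remark about transporting the planar averaging estimate through the lift using uniform two-sided bounds on $\delta_h$, $\delta_{e_h}$ and $\mathbf{F}_h$ fills in a step the paper leaves implicit when invoking the flat-case result on the lifted triangulation.
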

\begin{proof}
See Theorem 2.2 in \cite{karakashian2003posteriori}.
\end{proof}

To prove Theorem \ref{thm:Reliability}, we begin by bounding term $I$ of (\ref{eq:apostderivation}). Let $z_{h} = I_{h} z^{-l}$, $R := f_{h}\delta_{h} + \Delta_{\Gamma_{h}}u_{h} - u_{h}\delta_{h}$, $r := [\nabla_{\Gamma_{h}}u_{h} ; n_{h} ]$. Recalling that $\{\varphi_{p}\}_{p \in \mathcal{N}}$ is a partition of unity, recalling (\ref{eq:ClementInterpolantProperty}) and applying (\ref{eq:PoicareInequalityL2PatchShapeRegular}), we then have
\begin{align} \label{eq:I}
I = \sum_{p \in \mathcal{N}} \int_{w_{p}} R(z^{-l} - z_{p}^{-l})\varphi_{p}\ \mbox{d}s_{h} \leq C \sum_{p \in \mathcal{N}} h_{p} \norm{\mathbf{A}_{h}}_{l^{2},L^{\infty}(w_{p})}^{\frac{1}{2}} \norm{R\varphi_{p}}_{L^{2}(w_{p})} \norm{\nabla_{\Gamma}z}_{L^{2}( w_{p}^l)}.   
\end{align} 
Next we turn to bounding term $II$. Applying (\ref{eq:PoicareInequalityL2edgeShapeRegular}), we find
\begin{align}\label{eq:II}
II = - \sum_{p \in \mathcal{N}}\sum_{\bar{e}_{h} \ni p}\int_{e_{h}} r(z^{-l} - z_{p}^{-l})\varphi_{p}\ \mbox{d}s_{h} \leq C \sum_{p \in \mathcal{N}}\sum_{\bar{e}_{h} \ni p}h_{p}^{\frac{1}{2}}\norm{\mathbf{A}_{h}}_{l^{2},L^{\infty}(w_{p})}^{\frac{1}{2}} \norm{r\varphi_{p}}_{L^{2}(e_{h})}\norm{\nabla_{\Gamma}z}_{L^{2}(w_{p}^l)}. 
\end{align}
Let
\[ \eta_{p} = h_{p} \norm{R\varphi_{p}}_{L^{2}(w_{p})} + \sum_{\bar{e}_{h} \ni p}h_{p}^{\frac{1}{2}} \norm{r\varphi_{p}}_{L^{2}(e_{h})}. \]
Combining (\ref{eq:I}) and (\ref{eq:II}) and noting that each element $H_{h}$ has only three nodes, we thus find that

\begin{align}\label{eq:I+II}
I + II &\leq C \sum_{p \in \mathcal{N}} \norm{\mathbf{A}_{h}}_{l^{2},L^{\infty}(w_{p})}^{\frac{1}{2}} \eta_{p} \norm{\nabla_{\Gamma} z}_{L^{2}(w_{p}^l)} \leq C \left( \sum_{p \in \mathcal{N}} \norm{\mathbf{A}_{h}}_{l^{2},L^{\infty}(w_{p})} \eta_{p}^{2} \right)^{\frac{1}{2}} \norm{z}_{H^{1}(\Gamma)}     
\end{align}
where $C$ does not depend on $\mathcal{T}_{h}$ or any other essential quantities.

In order to bound term $III$ in (\ref{eq:apostderivation}) we first surface lift the integral back to $\Gamma$, and making use of (\ref{eq:Gamma2GammahGradient}) we get 
\begin{align*}
III = - \sum_{K_{h}^{l} \in \mathcal{T}_{h}^{l}}\int_{K_{h}^{l}}(\mathbf{P}-\mathbf{A}_{h}^{l})\nabla_{\Gamma}u_{h}^{l}\nabla_{\Gamma} z\ \mbox{d}s \leq \left(\sum_{K_{h} \in \mathcal{T}_{h}} \norm{\mathbf{B}_{h}\nabla_{\Gamma_{h}}u_{h}}^{2}_{L^{2}(K_{h})}\right)^{1/2} \norm{z}_{H^{1}(\Gamma)}.
\end{align*}
Next we bound term $IV$. First we note that, for $p \in \mathcal{N}$ and with $z^{-l}_{p}$ defined as in (\ref{eq:ClementInterpolantWeights}), we have
\begin{align*}
 \norm{\sqrt{\varphi_{p}}z_{p}^{-l}}_{L^{2}(w_{p})} &=\sqrt{\int_{w_{p}}\varphi_{p}\ \mbox{d}s_{h}} \frac{1}{\int_{w_{p}}\varphi_{p}\ \mbox{d}s_{h}} \left|\int_{w_{p}} \varphi_{p}z^{-l}_{p}\ \mbox{d}s_{h} \right| \leq \norm{\sqrt{\varphi_{p}}z^{-l}}_{L^{2}(w_{p})}. 
\end{align*}
Making use of the above, we have the following:
\begin{align*}
 IV &= \khsum{(1-\delta_{h})(u_{h} - f_{h})z_{h}} \leq \sum_{p \in \mathcal{N}} \norm{\sqrt{\varphi_{p}}(1-\delta_{h})(u_{h} - f_{h})}_{L^{2}(w_{p})}\norm{\sqrt{\varphi_{p}}z^{-l}_{p}}_{L^{2}(w_{p})}\\ 
&\leq \sum_{p \in \mathcal{N}} \norm{\delta_{h}^{-1}}_{L^{\infty}(w_{p})}^{1/2} \norm{\sqrt{\varphi_{p}}(1-\delta_{h})(u_{h} - f_{h})}_{L^{2}(w_{p})}\norm{\sqrt{\varphi_{p}^{l}} z}_{L^{2}(w_{p}^{l})} \\
&\leq \left(\sum_{p \in \mathcal{N}} \norm{\sqrt{\varphi_{p}}(1-\delta_{h})(u_{h} - f_{h})}_{L^{2}(w_{p})}^{2}\right)^{1/2} \norm{z}_{H^{1}(\Gamma)}.
\end{align*}
Making use of Remark \ref{LiftingEstimateRemark2}, we may bound term $V$ in the following way:
\begin{align*}
V &= \sum_{p \in \mathcal{N}}\int_{w_{p}}\mathbf{L}^{-l}(u_{h}^{l}) \cdot \delta_{h}\mathbf{F}_{h}\nabla_{\Gamma_{h}}((z^{-l} - z^{-l}_{p})\varphi_{p})\ \mbox{d}\sigma_{h}\\ 
&= \sum_{p \in \mathcal{N}} \int_{w_{p}}\mathbf{L}^{-l}(u_{h}^{l}) \cdot \left( \delta_{h}\mathbf{F}_{h} \nabla_{\Gamma_{h}}z^{-l}\varphi_{p} + \delta_{h}\mathbf{F}_{h}(z^{-l} - z^{-l}_{p})\nabla_{\Gamma_{h}}\varphi_{p}   \right)\ \mbox{d}\sigma_{h}\\ 
&= \sum_{p \in \mathcal{N}} \int_{w_{p}^l}\mathbf{L}(u_{h}^{l}) \cdot \nabla_{\Gamma}z \varphi_{p}^{l}\ \mbox{d}\sigma + \int_{w_{p}^{l}}\mathbf{L}(u_{h}^{l})\cdot \nabla_{\Gamma}\varphi_{p}^l (z - z_{p})\ \mbox{d}\sigma \\
&\leq \sum_{p \in \mathcal{N}} \norm{\mathbf{L}(u_{h}^{l})\sqrt{\varphi_{p}^{l}}}_{L^{2}(w_{p}^l)} \norm{\nabla_{\Gamma}z\sqrt{\varphi_{p}^{l}}}_{L^{2}(w_{p}^l)} + \sum_{p \in \mathcal{N}} \norm{\mathbf{L}(u_{h}^{l})\cdot \nabla \varphi_{p}^{l}}_{L^{2}(w_{p}^{l})} \norm{\delta_{h}}_{L^{\infty}(w_{p})}^{1/2}\norm{z - z_{p}}_{L^{2}(w_{p})} \\
&\leq \left(\sum_{p \in \mathcal{N}} \int_{w_{p}^{l}}\left(\sum_{e_{h}^{l} \subset \bar{w}_{p}^{l}}\mathbf{L}_{e_{h}^l}(u_{h}^{l})\right)^{2}\varphi_{p}^{l}\ \mbox{d}\sigma \right)^{1/2} \left(\sum_{p \in \mathcal{N}}\norm{\nabla_{\Gamma}z\sqrt{\varphi_{p}^{l}}}_{L^{2}(w_{p}^l)}^{2}\right)^{1/2} \\
&+ C \sqrt{2} \sum_{p \in \mathcal{N}} h_{p}^{-1} \norm{\mathbf{L}(u_{h}^{l})}_{L^{2}(w_{p}^{l})} \norm{\delta_{h}}_{L^{\infty}(w_{p})}^{1/2} h_{p} \norm{\mathbf{A}_{h}}_{l^{2},L^{\infty}(w_{p})}^{\frac{1}{2}} \norm{\nabla_{\Gamma}z}_{L^{2}(w_{p}^l)} \\
&\leq C \left(\sum_{p \in \mathcal{N}} \int_{w_{p}^{l}}\sum_{e_{h}^{l} \subset \bar{w}_{p}^{l}}\mathbf{L}_{e_{h}^l}^2(u_{h}^{l})\varphi_{p}^{l}\ \mbox{d}\sigma \right)^{1/2} \norm{z}_{H^{1}(\Gamma)}\\ 
&+ C \sqrt{2}\left( \sum_{p \in \mathcal{N}} \norm{\mathbf{A}_{h}}_{l^{2},L^{\infty}(w_{p})} \norm{\delta_{h}}_{L^{\infty}(w_{p})} \int_{w_{p}^{l}}\sum_{e_{h}^{l} \subset \bar{w}_{p}^{l}}\mathbf{L}_{e_{h}^l}^2(u_{h}^{l})\ \mbox{d}\sigma  \right)^{1/2} \norm{z}_{H^{1}(\Gamma)}.
\end{align*}
Using again Remark \ref{LiftingEstimateRemark2} and the DG lifting estimate in Lemma \ref{LiftingEstimate}, we have that
\begin{align*} 
&\sum_{p \in \mathcal{N}} \int_{w_{p}^{l}}\sum_{e_{h}^{l} \subset \bar{w}_{p}^{l}}\mathbf{L}_{e_{h}^l}^2(u_{h}^{l})\varphi_{p}^{l}\ \mbox{d}\sigma \leq \sum_{e_{h}^{l} \in \mathcal{E}_{h}^{l}} \int_{\Gamma}\mathbf{L}_{e_{h}^l}^{2}(u_{h}^{l})\ \mbox{d}\sigma = \sum_{e_{h}^{l} \in \mathcal{E}_{h}^{l}} \int_{w_{e_{h}}^{l}}\mathbf{L}_{e_{h}^l}^{2}(u_{h}^{l})\ \mbox{d}\sigma \\ 
&\leq C_{L} \sum_{e_{h} \in \mathcal{E}_{h}}\norm{\mathbf{F}_{h}}_{l^{2},L^{\infty}(e_{h})}^{2}\norm{\mathbf{P}_{h}(\mathbf{I} - d\mathbf{H})\mathbf{P}}_{l^{2},L^{\infty}(w_{e_{h}})}^2 \norm{\delta_{e_{h}}}_{L^{\infty}(e_{h})}^2\norm{\sqrt{\beta_{e_{h}}}[u_{h}]}_{L^{2}(e_{h})}^{2}. 
\end{align*}
Similarly, we have
\begin{align*}
&\sum_{p \in \mathcal{N}} \int_{w_{p}^{l}}\sum_{e_{h}^{l} \subset \bar{w}_{p}^{l}}\mathbf{L}_{e_{h}^l}^2(u_{h}^{l})\ \mbox{d}\sigma \leq C \sum_{K_{h}^{l} \in \mathcal{T}_{h}^{l}} \int_{K_{h}^{l}} \sum_{e_{h}^{l} \subset \partial K_{h}^{l}}\mathbf{L}_{e_{h}^l}^2(u_{h}^{l})\ \mbox{d}\sigma \\
&\leq C \sum_{K_{h}^{l} \in \mathcal{T}_{h}^{l}} \sum_{e_{h}^{l} \subset \partial K_{h}^{l}} \int_{\Gamma} \mathbf{L}_{e_{h}^l}^2(u_{h}^{l})\ \mbox{d}\sigma
= C \sum_{K_{h}^{l} \in \mathcal{T}_{h}^{l}} \sum_{e_{h}^{l} \subset \partial K_{h}^{l}} \int_{w_{e_{h}^l}} \mathbf{L}_{e_{h}^l}^2(u_{h}^{l})\ \mbox{d}\sigma \\
&\leq  C C_{L} \sum_{K_{h} \in \mathcal{T}_{h}} \sum_{e_{h} \subset \partial K_{h}} \norm{\mathbf{F}_{h}}_{l^{2},L^{\infty}(e_{h})}^{2}\norm{\mathbf{P}_{h}(\mathbf{I} - d\mathbf{H})\mathbf{P}}_{l^{2},L^{\infty}(w_{e_{h}})}^2 \norm{\delta_{e_{h}}}_{L^{\infty}(e_{h})}^2\norm{\sqrt{\beta_{e_{h}}}[u_{h}]}_{L^{2}(e_{h})}^{2}. 
\end{align*}

For term $VI$ we have the following,
\begin{align*} 
VI &= \sum_{e_{h} \in \mathcal{E}_{h}}\int_{e_{h}}[u_{h}]\left(\left\{\mathbf{F}_{h} \nabla_{\Gamma_{h}}z_{h}; ( \delta_{e_{h}} n^{-l} - \mathbf{P}^{-l} n_{h}) \right\} + d \left\{\mathbf{F}_{h} \nabla_{\Gamma_{h}}z_{h}; \mathbf{H} n_{h}\right\} \right)\ \mbox{d}s_{h} \\
&= \sum_{p \in \mathcal{N}}\sum_{\bar{e}_{h} \ni p}\int_{e_{h}}z_{p}^{-l}[u_{h}]\left\{\mathbf{F}_{h} \nabla_{\Gamma_{h}}\varphi_{p}; ( \delta_{e_{h}} n^{-l} - \mathbf{P}^{-l} n_{h}) \right\} + z_{p}^{-l}[u_{h}]d  \left\{\mathbf{F}_{h} \nabla_{\Gamma_{h}}\varphi_{p}; \mathbf{H} n_{h}\right\}\ \mbox{d}s_{h}.
\end{align*}
Making use of (\ref{eq:ClementInterpolantWeightsProperty}) and recalling that $\nabla_{\Gamma_{h}}\varphi_{p} = \mathbf{P}_{h}\nabla \varphi_{p}$, the first term of $VI$ becomes
\begin{align*}
&\sum_{p \in \mathcal{N}} \sum_{\bar{e}_{h} \ni p} \norm{[u_{h}] \left\{\mathbf{F}_{h} \nabla_{\Gamma_{h}}\varphi_{p}; ( \delta_{e_{h}} n^{-l} - \mathbf{P}^{-l} n_{h}) \right\} }_{L^{2}(e_{h})} \norm{z^{-l}_{p}}_{L^{2}(e_{h})} \\
&\leq \sqrt{\frac{3}{2}}\sum_{p \in \mathcal{N}}\norm{\sqrt{\delta_{h}}}_{L^{\infty}(w_{p})} \sum_{\bar{e}_{h} \ni p} \norm{[u_{h}] \left\{\nabla \varphi_{p}; \left(\mathbf{F}_{h}\mathbf{P}_{h}\right)^{T}( \delta_{e_{h}} n^{-l} - \mathbf{P}^{-l} n_{h}) \right\} }_{L^{2}(e_{h})} \norm{z}_{L^{2}(w_{p}^l)} \\
&\leq \sqrt{3}\sum_{p \in \mathcal{N}}\norm{\sqrt{\delta_{h}}}_{L^{\infty}(w_{p})} \sum_{\bar{e}_{h} \ni p} h_{e_{h}}^{-1}\norm{ [u_{h}] \left\{ |  \left(\mathbf{F}_{h}\mathbf{P}_{h}\right)^{T}( \delta_{e_{h}} n^{-l} - \mathbf{P}^{-l} n_{h}) | \right\} }_{L^{2}(e_{h})} \norm{z}_{L^{2}(w_{p}^{l})} \\
&\leq C \sqrt{3}\left( \sum_{p \in \mathcal{N}}\norm{\delta_{h}}_{L^{\infty}(w_{p})} \sum_{\bar{e}_{h} \ni p} h_{e_{h}}^{-2}\norm{ [u_{h}] \left\{ |  \left(\mathbf{F}_{h}\mathbf{P}_{h}\right)^{T}( \delta_{e_{h}} n^{-l} - \mathbf{P}^{-l} n_{h}) | \right\}}_{L^{2}(e_{h})}^{2} \right)^{1/2} \norm{z}_{H^{1}(\Gamma)}.
\end{align*}
Similarly, for the second term of $VI$, we get
\begin{align*}
C \sqrt{3}\left( \sum_{p \in \mathcal{N}}\norm{\delta_{h}}_{L^{\infty}(w_{p})} \sum_{\bar{e}_{h} \ni p} h_{e_{h}}^{-2}\norm{d [u_{h}]  \left\{| \left(\mathbf{F}_{h}\mathbf{P}_{h}\right)^{T} \mathbf{H} n_{h} |\right\}}_{L^{2}(e_{h})}^{2} \right)^{1/2} \norm{z}_{H^{1}(\Gamma)}. 
\end{align*}

To bound the final term $VII$ in our residual equation, we first prescribe the functional $J$ as follows:  
\[J(v) = \norm{\mbox{e}_{h}}_{DG(\Gamma)}^{-1}\left(\sum_{K_{h}^{l} \in \mathcal{T}_{h}^{l}}(\mbox{e}_{h},v)_{H^{1}(K_{h}^{l})} + \sum_{e_{h}^{l} \in \mathcal{E}_{h}^{l}}h_{e_{h}^{l}}^{-1}([\mbox{e}_{h}],[v])_{L^{2}(e_{h}^{l})}    \right)  \]
which is in fact a functional on $H^{1}(\Gamma) + V_{h}^{l}$. Note that $J(\mbox{e}_{h}) = \norm{\mbox{e}_{h}}_{DG(\Gamma)}$. For such a functional, the solution $z$ of the dual problem (\ref{eq:DualProblem}) satisfies
\[\norm{z}_{H^{1}(\Gamma)}^{2} \leq J(z) \leq \norm{z}_{DG(\Gamma)} = \norm{z}_{H^{1}(\Gamma)},  \]
where we have used that $\mathbf{L}(z) = 0$ and $[z] = 0$ since $z \in H^{1}(\Gamma)$. Hence $\norm{z}_{H^{1}(\Gamma)} \leq 1$. Making use of this stability estimate, the lifting estimate given in Lemma \ref{LiftingEstimate} and the norm equivalence result in Lemma \ref{thm:EquivalenceResult}, we have
\begin{align}\label{eq:boundForVII}
&VII := A_{\Gamma}^{IP}(u_{h}^{l,\perp},z) - J(u_{h}^{l,\perp}) \leq C \Bigg( \sum_{K_{h} \in \mathcal{T}_{h}} \norm{\sqrt{\beta_{e_{h}}}[u_{h}]}_{L^{2}(\partial K_{h})}^{2}\notag \\ 
&+ \norm{\mathbf{F}_{h}}_{l^{2},L^{\infty}(\partial K_{h})}^{2}\norm{\mathbf{P}_{h}(\mathbf{I} - d\mathbf{H})\mathbf{P}}_{l^{2},L^{\infty}(w_{K_{h}})}^2 \norm{\delta_{e_{h}}}_{L^{\infty}(\partial K_{h})}^2 \norm{\sqrt{\beta_{e_{h}}}[u_{h}]}_{L^{2}(\partial K_{h})}^{2} \Bigg)^{1/2}. 
\end{align}
Combining all of the estimates in this section and writing them in terms of element-wise computations completes the proof of Theorem \ref{thm:Reliability}.

\begin{lemma} \label{Gamma2GammahSmall}
Let $\Gamma$ be an oriented $C^{2}$ surface in $\mathbb{R}^{3}$ and $\Gamma_{h}$ its linear interpolation with outward unit normal $\nu_{h}$. Then we have
\begin{align*}
\norm{d}_{L^{\infty}(\Gamma_{h})} \leq Ch^{2},\ \norm{1-\delta_{h}}_{L^{\infty}(\Gamma_{h})} \leq &Ch^{2},\ \norm{\mathbf{P}^{-l}-\mathbf{A}_{h}}_{l^{2},L^{\infty}(\Gamma_{h})} \leq Ch^{2},\ \\ 
\norm{\mathbf{B}_{h}}_{l^{2},L^{\infty}(\Gamma_{h})} \leq Ch^{2},\ \norm{1-\delta_{e_{h}}}_{L^{\infty}(\Gamma_{h})} \leq &Ch^{2}\ \mbox{and}\ \norm{\left\{|n^{-l}- \mathbf{P}^{-l} n_{h}|\right\}}_{L^{\infty}(\mathcal{E}_{h})} \leq C h^{2}.
\end{align*}
\end{lemma}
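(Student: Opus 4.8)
The plan is to reduce all six estimates to two foundational facts about the linear interpolation of a $C^{2}$ surface, namely $\norm{d}_{L^{\infty}(\Gamma_{h})} \leq Ch^{2}$ and $\norm{\nu - \nu_{h}}_{L^{\infty}(\Gamma_{h})} \leq Ch$. The first is itself one of the six claims and is proved by a Taylor expansion: each vertex of a triangle $K_{h}$ lies on $\Gamma$, so $d$ vanishes there, and since $d \in C^{2}$ with second derivatives controlled by $\max_{i}\norm{\kappa_{i}}_{L^{\infty}(\Gamma)}$, the value of $d$ at an arbitrary point of the planar triangle is bounded by the interpolation remainder $Ch^{2}$. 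The normal estimate is the companion first-order interpolation bound for $C^{2}$ surfaces; both are classical and may be taken from \cite{demlow2008adaptive} (see also \cite{dziuk1988finite}). The single most useful consequence is the refined deficit $1 - \nu \cdot \nu_{h} = \tfrac{1}{2}\norm{\nu - \nu_{h}}_{l^{2}}^{2} \leq Ch^{2}$, which upgrades the $O(h)$ normal error to an $O(h^{2})$ bound whenever the normal deviation enters quadratically.

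Next I would dispatch the two scalar measure factors using the explicit formulas of the preceding lemma. For $\delta_{h} = (1-d\kappa_{1})(1-d\kappa_{2})\,\nu \cdot \nu_{h}$, the bound $|d|\leq Ch^{2}$ together with boundedness of $\kappa_{i}$ gives $(1-d\kappa_{1})(1-d\kappa_{2}) = 1 + O(h^{2})$, and combined with $\nu \cdot \nu_{h} = 1 + O(h^{2})$ this yields $\norm{1-\delta_{h}}_{L^{\infty}(\Gamma_{h})} \leq Ch^{2}$. For $\delta_{e_{h}} = \norm{\nabla \xi\, \tau_{h}}_{l^{2}}$ I would differentiate (\ref{eq:uniquePoint}) to get $\nabla \xi = \mathbf{P} - d\mathbf{H}$, so that $\nabla\xi\,\tau_{h} = \mathbf{P}\tau_{h} - d\mathbf{H}\tau_{h}$; since $\tau_{h}\cdot\nu_{h} = 0$ one has $\tau_{h}\cdot\nu = \tau_{h}\cdot(\nu-\nu_{h}) = O(h)$, whence $\norm{\mathbf{P}\tau_{h}}_{l^{2}}^{2} = 1 - (\tau_{h}\cdot\nu)^{2} = 1 - O(h^{2})$ while the $d\mathbf{H}$ term is $O(h^{2})$, giving $\norm{1-\delta_{e_{h}}}_{L^{\infty}(\Gamma_{h})} \leq Ch^{2}$.

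For the matrix and vector estimates the decisive point is to extract a \emph{quadratic cancellation} rather than bounding naively. In the flat limit $d = 0$, $\delta_{h}=1$, $\nu_{h}=\nu$ one has $\mathbf{A}_{h} = \mathbf{P}\mathbf{P}_{h}\mathbf{P} = \mathbf{P}$, so I would write $\mathbf{A}_{h} - \mathbf{P}$ as a sum of perturbations in $d\mathbf{H}$, in $\delta_{h}^{-1}-1$ (both $O(h^{2})$ by the above), and in $\mathbf{P}_{h}-\mathbf{P}$. The last is the delicate one: using $\mathbf{P}_{h}-\mathbf{P} = \nu\otimes\nu - \nu_{h}\otimes\nu_{h}$ and $\mathbf{P}\nu = 0$ gives $\mathbf{P}(\mathbf{P}_{h}-\mathbf{P})\mathbf{P} = -(\mathbf{P}\nu_{h})\otimes(\mathbf{P}\nu_{h})$, and since $\norm{\mathbf{P}\nu_{h}}_{l^{2}}^{2} = 1 - (\nu\cdot\nu_{h})^{2} = O(h^{2})$ the troublesome linear term is annihilated by the sandwiching projections. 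Collecting the pieces gives $\norm{\mathbf{P}^{-l}-\mathbf{A}_{h}}_{l^{2},L^{\infty}(\Gamma_{h})} \leq Ch^{2}$, and then $\mathbf{B}_{h} = \sqrt{\delta_{h}}(\mathbf{P}-\mathbf{A}_{h})\mathbf{F}_{h}$ with $\sqrt{\delta_{h}}$ and $\mathbf{F}_{h}$ uniformly bounded (the former near $1$, the latter since $|d|$ is small and $\nu_{h}\cdot\nu$ is bounded below) immediately yields $\norm{\mathbf{B}_{h}}_{l^{2},L^{\infty}(\Gamma_{h})}\leq Ch^{2}$. The conormal estimate is handled by the same mechanism: expanding $\mathbf{P}n_{h}$ in the orthonormal frame $\{\tau, n\}$ of $T_{\xi}\Gamma$, its tangential component is $(\mathbf{P}n_{h})\cdot\tau = -(\tau_{h}\cdot\nu)(n_{h}\cdot\nu) + O(h^{2})$, a product of two independently $O(h)$ quantities, while its conormal component equals $\sqrt{1-(n_{h}\cdot\nu)^{2}} = 1 - O(h^{2})$; hence $\mathbf{P}n_{h} = n + O(h^{2})$ on each side, and averaging preserves $\norm{\{|n^{-l}-\mathbf{P}^{-l}n_{h}|\}}_{L^{\infty}(\mathcal{E}_{h})} \leq Ch^{2}$.

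The main obstacle throughout is precisely this quadratic cancellation: since $\nu_{h}$, $\mathbf{P}_{h}$, $\tau_{h}$ and $n_{h}$ each deviate from their smooth counterparts only to first order in $h$, a direct triangle-inequality estimate would give merely $O(h)$. The $O(h^{2})$ rate is recovered only by exploiting structure---the projection sandwich $\mathbf{P}(\cdot)\mathbf{P}$, the identity $1-(\nu\cdot\nu_{h})^{2}=O(h^{2})$, and the appearance of products of two orthogonality defects---so the careful bookkeeping of these cancellations, rather than any single hard inequality, is where the real work lies.
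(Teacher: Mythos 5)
Your argument is correct, but it takes a genuinely different route from the paper, which does not prove these estimates at all: it cites \cite{dziuk1988finite} for the first three bounds, Lemmas 3.3 and 3.5 of \cite{GieMue_prep} for the bounds on $1-\delta_{e_{h}}$ and on the conormal average, and observes (as you do) that the $\mathbf{B}_{h}$ bound follows from the $\mathbf{A}_{h}$ bound together with uniform boundedness of $\sqrt{\delta_{h}}$ and $\mathbf{F}_{h}$. What you have done is reconstruct, from first principles, the content of those references: the interpolation bounds $\norm{d}_{L^{\infty}(\Gamma_{h})}\leq Ch^{2}$ and $\norm{\nu-\nu_{h}}_{L^{\infty}(\Gamma_{h})}\leq Ch$, the upgrade $1-\nu\cdot\nu_{h}=\tfrac{1}{2}\norm{\nu-\nu_{h}}_{l^{2}}^{2}\leq Ch^{2}$, and the rank-one cancellation $\mathbf{P}\mathbf{P}_{h}\mathbf{P}=\mathbf{P}-(\mathbf{P}\nu_{h})\otimes(\mathbf{P}\nu_{h})$ with $\norm{\mathbf{P}\nu_{h}}_{l^{2}}^{2}=1-(\nu\cdot\nu_{h})^{2}=O(h^{2})$, which is indeed the mechanism behind all six estimates. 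Your treatment of $\delta_{e_{h}}$ via $\nabla\xi=\mathbf{P}-d\mathbf{H}$ and $\tau_{h}\cdot\nu=\tau_{h}\cdot(\nu-\nu_{h})$, and of the conormal via the frame decomposition with the along-edge component a product of two $O(h)$ orthogonality defects, mirrors the structure of the cited lemmas in \cite{GieMue_prep}. What your route buys is self-containedness --- in particular, independence from the unpublished preprint \cite{GieMue_prep} --- and a transparent explanation of why a naive triangle inequality would only give $O(h)$; what the paper's route buys is brevity and delegation to established results.

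Two small points to tighten. First, in the conormal estimate your length computations alone are consistent with the component of $\mathbf{P}n_{h}$ along $n$ being $-1+O(h^{2})$ rather than $+1-O(h^{2})$; you need a one-line orientation argument (that $n_{h}^{\pm}$ and the lifted conormal $n^{\pm}$ both point out of the corresponding element, together with $\norm{\nu-\nu_{h}}\leq Ch$) to fix the sign $(\mathbf{P}n_{h})\cdot n>0$ for $h$ small. Second, the uniform bound on $\mathbf{F}_{h}$ requires $\nu\cdot\nu_{h}$ bounded away from zero, which the paper's standing assumption $\nu\cdot\nu_{h}\geq 0$ does not by itself supply; it does follow from your own estimate $1-\nu\cdot\nu_{h}\leq Ch^{2}$ once $h$ is sufficiently small, so you should state that smallness assumption explicitly.
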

\begin{proof}
See \cite{dziuk1988finite} for the first three estimates and Lemma 3.3 and 3.5 in \cite{GieMue_prep} for the fifth and sixth estimate, respectively. The fourth estimate follows straightforwardly from the third estimate. 

\end{proof}
\begin{remark}
The geometric estimates in Lemma  \ref{Gamma2GammahSmall} make it clear that, if $\Gamma$ is sufficiently smooth, $\mathcal{G}_{K_{h}}$ and $\mathcal{G}_{DG_{K_{h}}}$ are of higher order compared to $\mathcal{R}_{K_{h}}$ and $\mathcal{R}_{DG_{K_{h}}}$ i.e.
\begin{align*}
\left(\sum_{K_{h} \in \mathcal{T}_{h}}\mathcal{R}_{K_{h}}^{2} + \mathcal{R}_{DG_{K_{h}}}^{2}\right)^{1/2} \leq Ch\  \  \mbox{and}\  \ \left(\sum_{K_{h} \in \mathcal{T}_{h}}\mathcal{G}_{K_{h}}^{2} + \mathcal{G}_{DG_{K_{h}}}^{2}\right)^{1/2} \leq Ch^{2}.   
\end{align*}  
\end{remark}

\section{A posteriori lower bound (efficiency)}
We now show that the estimator in Theorem \ref{thm:Reliability} is efficient up to higher-order terms.
\begin{theorem}
Suppose that $\mathcal{T}_{h}$ is shape-regular. As before, let $R := f_{h}\delta_{h} + \Delta_{\Gamma_{h}}u_{h} - u_{h}\delta_{h}$ and $r := [\nabla_{\Gamma_{h}}u_{h} ; n_{h} ]$. Then for each $K_{h} \in \mathcal{T}_{h}$ we have
\begin{align*}
\eta_{K_{h}} + \norm{\sqrt{\beta_{e_{h}}}[u_{h}]}_{L^{2}(\partial K_{h})} &\leq C \max \left\{1,  \norm{\mathbf{A}_{h}}_{l^{2},L^{\infty}(w_{K_{h}})}^{1/2}\right\} \left(\norm{u - u_{h}^{l}}_{DG(w_{K_{h}}^l)} + \norm{\mathbf{B}_{h}\nabla_{\Gamma_{h}}u_{h}}_{L^{2}(w_{K_{h}})} \right)\\ 
&+ C h_{K_{h}} \norm{R - \bar{R}}_{L^{2}(w_{K_{h}})} + C h_{K_{h}}^{1/2} \norm{r - \bar{r}}_{L^{2}(\partial K_{h})}.
\end{align*}
where $\eta_{K_{h}}$ is given in Theorem \ref{thm:Reliability}. Here $C$ depends on the number of elements in $w_{K_{h}}$, the minimum angle of the elements in $w_{K_{h}}$ and on the upper bound for the penalty values $\omega_{e_h}$. $\bar{R}$ and $\bar{r}$ are respectively piecewise linear approximations of $R$ and $r$.
\end{theorem}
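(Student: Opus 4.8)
The plan is to use the classical residual bubble-function technique of Verf\"urth, adapted to the lifted surface setting: the three quantities $h_{K_h}\norm{R}_{L^{2}(K_h)}$, $h_{K_h}^{1/2}\norm{r}_{L^{2}(\partial K_h)}$ and $\norm{\sqrt{\beta_{e_h}}[u_h]}_{L^{2}(\partial K_h)}$ are treated separately, each by testing the combination of the discrete problem $(\mathbf{P}_{\Gamma_h}^{IP})$ and the continuous weak form (\ref{eq:weakH1}) against a compactly supported bubble test function and then carefully accounting for the geometric factors produced by the surface lift. For the element residual I would fix $K_h$, let $b_{K_h}$ be the interior bubble and set $v:=b_{K_h}\bar{R}$, extended by zero, so that $v$ is continuous and $v=0$ on $\partial K_h$. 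Integrating $\int_{K_h}Rv\ \mbox{d}\sigma_h$ by parts to remove $\Delta_{\Gamma_h}u_h$, transporting each term to $\Gamma$ via the change of measure (\ref{eq:changeOfMeasure}), the gradient identity (\ref{eq:Gamma2GammahGradient}) and the measure-preserving identity $\int_{\Gamma_h}\nabla_{\Gamma_h}u_h\cdot\nabla_{\Gamma_h}v\ \mbox{d}\sigma_h=\int_{\Gamma}\mathbf{A}_h^{l}\nabla_{\Gamma}u_h^{l}\cdot\nabla_{\Gamma}v^{l}\ \mbox{d}\sigma$, and finally using $v^{l}\in H^{1}_{0}(K_h^{l})$ to replace $\int_{K_h^{l}}fv^{l}$ through (\ref{eq:weakH1}), one arrives at the key local identity
\[\int_{K_h}Rv\ \mbox{d}\sigma_h = \int_{K_h^{l}}\nabla_{\Gamma}e_h\cdot\nabla_{\Gamma}v^{l}+e_h v^{l}\ \mbox{d}\sigma + \int_{K_h^{l}}(\mathbf{P}-\mathbf{A}_h^{l})\nabla_{\Gamma}u_h^{l}\cdot\nabla_{\Gamma}v^{l}\ \mbox{d}\sigma,\]
with $e_h=u-u_h^{l}$. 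The left-hand side is bounded below by $c\norm{\bar{R}}_{L^{2}(K_h)}^{2}-\norm{R-\bar{R}}_{L^{2}(K_h)}\norm{\bar{R}}_{L^{2}(K_h)}$ by the bubble norm-equivalence; on the right, the last integral is estimated by $\norm{(\mathbf{P}-\mathbf{A}_h^{l})\nabla_{\Gamma}u_h^{l}}_{L^{2}(K_h^{l})}=\norm{\mathbf{B}_h\nabla_{\Gamma_h}u_h}_{L^{2}(K_h)}$, the equality being exactly (\ref{eq:Bh}) together with (\ref{eq:Gamma2GammahGradient}) and (\ref{eq:changeOfMeasure}), while the first integral is controlled by $\norm{e_h}_{DG(K_h^{l})}$. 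Using the planar inverse estimate $\norm{\nabla_{\Gamma_h}v}_{L^{2}(K_h)}\leq C h_{K_h}^{-1}\norm{\bar{R}}_{L^{2}(K_h)}$ and the corresponding lift bounds for $\norm{v^{l}}_{L^{2}(K_h^{l})},\norm{\nabla_{\Gamma}v^{l}}_{L^{2}(K_h^{l})}$, then dividing by $\norm{\bar{R}}_{L^{2}(K_h)}$ and multiplying by $h_{K_h}$ produces the estimate for $h_{K_h}\norm{R}_{L^{2}(K_h)}$ after restoring $h_{K_h}\norm{R-\bar{R}}_{L^{2}(K_h)}$.

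For the jump residual I would repeat this on the two-element patch $w_{e_h}=K_h^{+}\cup K_h^{-}$ with the edge bubble $b_{e_h}$ and $v:=b_{e_h}\bar{r}$, now vanishing on $\partial w_{e_h}$ but not on $e_h$. Summing the element-wise integration by parts over $K_h^{\pm}$, the interior boundary contributions collapse to $\int_{e_h}[\nabla_{\Gamma_h}u_h;n_h]v\ \mbox{d}s_h=\int_{e_h}rv\ \mbox{d}s_h$ by Definition \ref{def:jumpAverageOparators}, and the same transport-and-weak-form manipulation expresses $\int_{e_h}rv\ \mbox{d}s_h$ as the sum of an $e_h$-localised $\norm{e_h}_{DG(w_{e_h}^{l})}$ term, a $\norm{\mathbf{B}_h\nabla_{\Gamma_h}u_h}_{L^{2}(w_{e_h})}$ term, and a volume term $\sum_{K_h^{\pm}}\int_{K_h}Rv$. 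The edge-bubble scalings $\norm{v}_{L^{2}(K_h)}\leq C h_{e_h}^{1/2}\norm{\bar{r}}_{L^{2}(e_h)}$ and $\norm{\nabla_{\Gamma_h}v}_{L^{2}(K_h)}\leq C h_{e_h}^{-1/2}\norm{\bar{r}}_{L^{2}(e_h)}$ then yield the bound for $h_{e_h}^{1/2}\norm{r}_{L^{2}(e_h)}$, in which the volume term contributes precisely $h_{e_h}\norm{R}_{L^{2}(w_{e_h})}$ and is absorbed using the element bound already established; this is exactly why the right-hand side must be measured over the patch $w_{K_h}$ rather than over $K_h$ alone. Summing over the three edges of $\partial K_h$ completes the jump residual.

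The penalty term is immediate. Since $u\in H^{1}(\Gamma)$ we have $[u]=0$ on every $e_h^{l}$, hence $[u_h^{l}]=-[e_h]$, and after converting $\int_{e_h}\beta_{e_h}|[u_h]|^{2}\ \mbox{d}s_h$ to $\Gamma$ with the bounded factors $\delta_{e_h},\omega_{e_h}$ and the equivalence $h_{e_h}\simeq h_{e_h^{l}}$, the term is dominated by the edge part of $\norm{e_h}_{DG(w_{K_h}^{l})}$. Assembling the three contributions and collecting all the $O(1)$ geometric quantities $\delta_h,\mathbf{F}_h,\mathbf{A}_h$ into the single factor $\max\{1,\norm{\mathbf{A}_h}_{l^{2},L^{\infty}(w_{K_h})}^{1/2}\}$, while keeping $\norm{R-\bar{R}}$ and $\norm{r-\bar{r}}$ as the data-oscillation remainders (these arise purely on $\Gamma_h$ and therefore carry no geometric factor), gives the claimed estimate.

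I expect the main obstacle to be the geometric bookkeeping rather than the functional-analytic core, which is standard. Each time a bubble test function is transported between $\Gamma_h$ and $\Gamma$ the factors $\delta_h$, $\mathbf{F}_h$ and $\mathbf{A}_h^{l}$ reappear, and one must check that (i) the lifted inverse and trace estimates retain the correct powers of $h_{K_h}$, so the final scaling is untouched, and (ii) all these quantities are mutually equivalent $O(1)$ perturbations of $\mathbf{P}$ by Lemma \ref{Gamma2GammahSmall}, so that they consolidate into $\max\{1,\norm{\mathbf{A}_h}^{1/2}\}$ with a constant depending only on the shape-regularity and on the upper bound for $\omega_{e_h}$. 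Isolating precisely the combination $(\mathbf{P}-\mathbf{A}_h^{l})\nabla_{\Gamma}u_h^{l}$ and recognising it, through (\ref{eq:Bh}) and (\ref{eq:Gamma2GammahGradient}), as the computable geometric consistency term $\norm{\mathbf{B}_h\nabla_{\Gamma_h}u_h}$ rather than folding it into the error is the one genuinely surface-specific step of the argument.
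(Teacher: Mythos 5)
Your proposal is correct and follows essentially the same route as the paper's proof: the Verf\"urth bubble-function technique (interior bubble $\bar{R}\phi_{K_h}$, edge bubble $\bar{r}\phi_{e_h}$ on the two-element patch), the identical key local identity isolating $(\mathbf{P}-\mathbf{A}_h^{l})\nabla_{\Gamma}u_h^{l}$ as the $\norm{\mathbf{B}_h\nabla_{\Gamma_h}u_h}$ term via (\ref{eq:Bh}) and (\ref{eq:Gamma2GammahGradient}), the same inverse-estimate scalings and oscillation terms, absorption of the volume residual into the edge bound over the patch, and the same one-line treatment of the penalty term using $[u]=0$. The only (immaterial) deviation is that you invoke the weak form (\ref{eq:weakH1}) with the zero-extended lifted bubble, whereas the paper uses the pointwise identity $(f+\Delta_{\Gamma}u-u)|_{K_h^{l}}=0$ followed by integration by parts on $K_h^{l}$ --- your variant even avoids an implicit regularity assumption on $u$.
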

\begin{proof}
The proof will follow the bubble function approach considered in \cite{verfurth1989posteriori}, which was then straightforwardly applied to the DG framework in \cite{schotzau2009robust}. First we bound the element residual $\norm{R}_{L^{2}(K_{h})}$. Let $p \in \mathcal{N}$ and $K_{h} \subset w_{p}$. Letting $p_{i}$, $1 \leq i \leq 3$ be the nodes of $K_{h}$, we define the bubble function $\phi_{K_{h}} = \prod_{i=1}^{3}\varphi_{p_{i}}$. Integrating by parts on $K_{h}$, lifting the resulting integral onto $K_{h}^{l}$, making use of the fact that the exact solution satisfies $(f + \Delta_{\Gamma}u - u)|_{K_{h}^{l}} = 0$ and integrating by parts on $K_{h}^{l}$, we get
\begin{align*}
\int_{K_{h}} R \bar{R}\phi_{K_{h}}\ \mbox{d}\sigma_{h} &= \int_{K_{h}}f_{h}\delta_{h}\bar{R}\phi_{K_{h}} + \nabla_{\Gamma_{h}}u_{h} \cdot \nabla_{\Gamma_{h}}(\bar{R}\phi_{K_{h}}) - u_{h}\delta_{h} \bar{R}\phi_{K_{h}}\ \mbox{d}\sigma_{h} \\
&= \int_{K_{h}^{l}} f \bar{R}^{l}\phi_{K_{h}}^{l} + \mathbf{A}_{h}^{l}\nabla_{\Gamma}u_{h}^{l}\cdot \nabla_{\Gamma}(\bar{R}^{l}\phi_{K_{h}}^{l}) - u_{h}^{l} \bar{R}^{l}\phi_{K_{h}}^{l}\ \mbox{d}\sigma \\
&= \int_{K_{h}^{l}}\nabla_{\Gamma}(u-u_{h}^{l})\cdot \nabla_{\Gamma}(\bar{R}^{l}\phi_{K_{h}}^{l})\ \mbox{d}\sigma + \int_{K_{h}^{l}}(u-u_{h}^{l})\bar{R}^{l}\phi_{K_{h}}^{l}\ \mbox{d}\sigma \\
&+ \int_{K_{h}^{l}}(\mathbf{P}-\mathbf{A}_{h}^{l})\nabla_{\Gamma}u_{h}^{l} \cdot \nabla_{\Gamma}(\bar{R}^{l}\phi_{K_{h}}^{l})\ \mbox{d}\sigma.  
\end{align*}
Note that we have used the fact that $\phi_{K_{h}} = 0$ on $\partial K_{h}$ so that all boundary terms resulting from the integration by parts vanish. We then have
\begin{align*}
\int_{K_{h}} R \bar{R}\phi_{K_{h}}\ \mbox{d}\sigma_{h} &\leq C\left( \norm{u-u_{h}^{l}}_{DG(K_{h}^{l})} + \norm{(\mathbf{P}-\mathbf{A}_{h}^{l})\nabla_{\Gamma}u_{h}^{l}}_{L^{2}(K_{h}^{l})}\right) \norm{\nabla_{\Gamma}(\bar{R}^{l}\phi_{K_{h}}^{l})}_{L^{2}(K_{h}^{l})} \\
&\leq C\left( \norm{u-u_{h}^{l}}_{DG(K_{h}^{l})} + \norm{\mathbf{B}_{h}\nabla_{\Gamma}u_{h}}_{L^{2}(K_{h})}\right) \norm{\mathbf{A}_{h}}_{L^{\infty}(K_{h})}^{1/2}\norm{\nabla_{\Gamma_{h}}(\bar{R}\phi_{K_{h}})}_{L^{2}(K_{h})}
\end{align*}
where we have used Poincare's inequality. Since $\bar{R}\phi_{K_{h}}$ is a polynomial, it satisfies the inverse inequality
\[ \norm{\nabla_{\Gamma_{h}}(\bar{R}\phi_{K_{h}})}_{L^{2}(K_{h})} \leq C h_{K_{h}}^{-1} \norm{\bar{R}}_{L^{2}(K_{h})}  \]
where $C$ depends only on the shape-regularity of $K_{h}$. Applying this inverse inequality, we get
\begin{align*}
\int_{K_{h}} R \bar{R}\phi_{K_{h}}\ \mbox{d}\sigma_{h} \leq C h_{K_{h}}^{-1} \norm{\mathbf{A}_{h}}_{L^{\infty}(K_{h})}^{1/2}\left( \norm{u-u_{h}^{l}}_{DG(K_{h}^{l})} + \norm{\mathbf{B}_{h}\nabla_{\Gamma}u_{h}}_{L^{2}(K_{h})}\right)\norm{\bar{R}}_{L^{2}(K_{h})}. 
\end{align*}
Applying Theorem 2.2 in \cite{ainsworth2011posteriori}, we have
\begin{align*}
\norm{\bar{R}}_{L^{2}(K_{h})}^{2} &\leq  C\norm{\sqrt{\phi_{K_{h}}}\bar{R}}_{L^{2}(K_{h})}^{2} \\
&\leq C\left( \int_{K_{h}} R \bar{R}\phi_{K_{h}}\ \mbox{d}\sigma_{h} + \int_{K_{h}}\bar{R}(\bar{R} - R)\phi_{K_{h}}\ \mbox{d}\sigma_{h} \right) \\
&\leq C\left( \int_{K_{h}} R \bar{R}\phi_{K_{h}}\ \mbox{d}\sigma_{h} + \norm{R - \bar{R}}_{L^{2}(K_{h})}\norm{\bar{R}\phi_{K_{h}}}_{L^{2}(K_{h})} \right) \\
&\leq C\left( \int_{K_{h}} R \bar{R}\phi_{K_{h}}\ \mbox{d}\sigma_{h} + \norm{R - \bar{R}}_{L^{2}(K_{h})}\norm{\bar{R}}_{L^{2}(K_{h})} \right).
\end{align*}
Combining this with the previous inequality, we get
\begin{align*}
 \norm{\bar{R}}_{L^{2}(K_{h})}^{2} &\leq \left(\norm{R - \bar{R}}_{L^{2}(K_{h})} + C h_{K_{h}}^{-1} \norm{\mathbf{A}_{h}}_{L^{\infty}(K_{h})}^{1/2}\left( \norm{u-u_{h}^{l}}_{DG(K_{h}^{l})} + \norm{\mathbf{B}_{h}\nabla_{\Gamma}u_{h}}_{L^{2}(K_{h})}\right) \right)\norm{\bar{R}}_{L^{2}(K_{h})}. 
\end{align*}
Dividing both sides by $\norm{\bar{R}}_{L^{2}(K_{h})}$ and making use of the triangle inequality, we obtain
\begin{align*}
h_{K_{h}}\norm{R}_{L^{2}(K_{h})} \leq C \left( \norm{\mathbf{A}_{h}}_{L^{\infty}(K_{h})}^{1/2}\left( \norm{u-u_{h}^{l}}_{DG(K_{h}^{l})} + \norm{\mathbf{B}_{h}\nabla_{\Gamma}u_{h}}_{L^{2}(K_{h})}\right) + h_{K_{h}} \norm{R - \bar{R}}_{L^{2}(K_{h})} \right). 
\end{align*}
Next we bound the jump residual $\norm{r}_{L^{2}(\partial K_{h})}$. Let $e_{h}$ be an edge which is shared by elements $K_{h}^{1} = K_{h}$ and $K_{h}^{2}$ and whose closure contains the nodes $p_{1}$ and $p_{2}$. Let $\lambda_{i,j}$, $i,j=1,2,$ be the barycentric coordinate on triangle $i$ corresponding to vertex $p_{j}$, and define $\phi_{e_{h}}|_{K_{h}^{i}} = \lambda_{i,1} \lambda_{i,2}$. Thus $\phi_{e_{h}} \in H^{1}_{0}(K_{h}^{1} \cup K_{h}^{2})$, and $\phi_{e_{h}} > 0$ on $e_{h}$. Finally let $w_{e_{h}} = K_{h}^{1} \cup K_{h}^{2}$. Applying similar arguments as for the element residual $\norm{R}_{L^{2}(K_{h})}$, we have
\begin{align*}
\int_{e_{h}} r \bar{r}\phi_{e_{h}}\ \mbox{d}s_{h} &= \int_{w_{e_{h}}} \Delta_{\Gamma_{h}}u_{h} \bar{r} \phi_{e_{h}} + \nabla_{\Gamma_{h}}u_{h} \cdot \nabla_{\Gamma_{h}}(\bar{r}\phi_{e_{h}})\ \mbox{d}\sigma_{h} \\
&=  \int_{w_{e_{h}}} R \bar{r} \phi_{e_{h}}\ \mbox{d}\sigma_{h} + \int_{w_{e_{h}}^{l}} \mathbf{A}_{h}^{l}\nabla_{\Gamma}u_{h}^{l} \cdot \nabla_{\Gamma}(\bar{r}^{l}\phi_{e_{h}}^{l})\ \mbox{d}\sigma + \int_{w_{e_{h}}^{l}}(u_{h}^{l}-f)\bar{r}^{l}\phi_{e_{h}}^{l}\ \mbox{d}\sigma \\
&= \int_{w_{e_{h}}} R \bar{r} \phi_{e_{h}}\ \mbox{d}\sigma_{h} + \int_{w_{e_{h}}^{l}} \nabla_{\Gamma}(u_{h}^{l}-u) \cdot \nabla_{\Gamma}(\bar{r}^{l}\phi_{e_{h}}^{l})\ \mbox{d}\sigma + \int_{w_{e_{h}}^{l}}(u_{h}^{l}-u)\bar{r}^{l}\phi_{e_{h}}^{l}\ \mbox{d}\sigma \\ &+ \int_{w_{e_{h}}^{l}} (\mathbf{A}_{h}^{l}-\mathbf{P})\nabla_{\Gamma}u_{h}^{l} \cdot \nabla_{\Gamma}(\bar{r}^{l}\phi_{e_{h}}^{l})\ \mbox{d}\sigma  
\end{align*}
where again we have used the fact that $\phi_{e_{h}} = 0$ on $\partial w_{e_{h}}$ so that all boundary terms resulting from the integration by parts vanish. We now proceed to bounding the terms as done previously to obtain
\begin{align*}
\int_{e_{h}} r \bar{r}\phi_{e_{h}}\ \mbox{d}s_{h} &\leq C \Bigg(\norm{R}_{L^{2}(w_{e_{h}})} \norm{\bar{r} \phi_{e_{h}}}_{L^{2}(w_{e_{h}})} \\
&+ \left( \norm{u-u_{h}^{l}}_{DG(w_{e_{h}}^{l})} + \norm{\mathbf{B}_{h} \nabla_{\Gamma_{h}}u_{h}}_{L^{2}(w_{e_{h}})}     \right)\norm{\mathbf{A}_{h}}_{L^{\infty}(w_{e_{h}})}^{1/2}\norm{\nabla_{\Gamma_{h}}(\bar{r}\phi_{e_{h}})}_{L^{2}(w_{e_{h}})}\Bigg)
\end{align*}
where again the constant $C$ depends only on the shape regularity of the mesh. Since $\bar{r}\phi_{e_{h}}$ is a polynomial, it satisfies the inverse inequalities
\[\norm{\bar{r}\phi_{e_{h}}}_{L^{2}(w_{e_{h}})} \leq C h_{K_{h}}^{1/2} \norm{\bar{r}}_{L^{2}(e_{h})}\  \  \ , \  \  \ \norm{\nabla_{\Gamma_{h}}(\bar{r}\phi_{e_{h}})}_{L^{2}(w_{e_{h}})} \leq C h_{K_{h}}^{-1/2} \norm{\bar{r}}_{L^{2}(e_{h})}.    \]
Applying these inverse inequalities, we get
\begin{align*}
\int_{e_{h}} r \bar{r}\phi_{e_{h}}\ \mbox{d}s_{h} &\leq C \Bigg( h_{K_{h}}^{-1/2} \norm{\mathbf{A}_{h}}_{L^{\infty}(w_{e_{h}})}^{1/2}\left( \norm{u-u_{h}^{l}}_{DG(w_{e_{h}}^{l})} + \norm{\mathbf{B}_{h}\nabla_{\Gamma}u_{h}}_{L^{2}(w_{e_{h}})}\right) \\
&+ h_{K_{h}}^{1/2}\norm{R}_{L^{2}(w_{e_{h}})} \Bigg)\norm{\bar{r}}_{L^{2}(e_{h})}.
\end{align*}
Applying Theorem 2.4 in \cite{ainsworth2011posteriori}, we have
\begin{align*}
\norm{\bar{r}}_{L^{2}(e_{h})}^{2} &\leq  C \norm{\sqrt{\phi_{e_{h}}}\bar{r}}_{L^{2}(e_{h})}^{2} \\
&\leq C \left(\int_{e_{h}} r \bar{r}\phi_{e_{h}}\ \mbox{d}\sigma_{h} + \norm{r - \bar{r}}_{L^{2}(e_{h})} \norm{\bar{r}\phi_{e_{h}}}_{L^{2}(e_{h})}\right) \\
&\leq C \left(\int_{e_{h}} r \bar{r}\phi_{e_{h}}\ \mbox{d}\sigma_{h} + \norm{r - \bar{r}}_{L^{2}(e_{h})} \norm{\bar{r}}_{L^{2}(e_{h})}\right).
\end{align*}
Combining this with the previous inequality, we get
\begin{align*}
 \norm{\bar{r}}_{L^{2}(e_{h})}^{2} &\leq C \Bigg( \norm{r - \bar{r}}_{L^{2}(e_{h})} + h_{K_{h}}^{-1/2} \norm{\mathbf{A}_{h}}_{L^{\infty}(w_{e_{h}})}^{1/2}\left( \norm{u-u_{h}^{l}}_{DG(w_{e_{h}}^{l})} + \norm{\mathbf{B}_{h}\nabla_{\Gamma}u_{h}}_{L^{2}(w_{e_{h}})}\right) \\
&+ h_{K_{h}}^{1/2}\norm{R}_{L^{2}(w_{e_{h}})} \Bigg)\norm{\bar{r}}_{L^{2}(e_{h})}. 
\end{align*}
Dividing both sides by $\norm{\bar{r}}_{L^{2}(e_{h})}$ and making use of the triangle inequality, we obtain
\begin{align*}
h_{K_{h}}^{1/2}\norm{r}_{L^{2}(e_{h})} &\leq C \Bigg( \norm{\mathbf{A}_{h}}_{L^{\infty}(K_{h})}^{1/2}\left( \norm{u-u_{h}^{l}}_{DG(K_{h}^{l})} + \norm{\mathbf{B}_{h}\nabla_{\Gamma}u_{h}}_{L^{2}(K_{h})}\right) + h_{K_{h}}\norm{R}_{L^{2}(w_{e_{h}})} \\
&+ h_{K_{h}}^{1/2}\norm{r - \bar{r}}_{L^{2}(e_{h})} \Bigg). 
\end{align*}
For the jump term in our estimator, we note that since $[u] = 0$ we have
\[\norm{\sqrt{\beta_{e_{h}}}[u_{h}]}_{L^{2}(\partial K_{h})} = \norm{\sqrt{\beta_{e_{h}^{l}}}[u_{h}^{l}]}_{L^{2}(\partial K_{h}^{l})} = \norm{\sqrt{\beta_{e_{h}^{l}}}[u - u_{h}^{l}]}_{L^{2}(\partial K_{h}^{l})} \leq C \norm{u - u_{h}^{l}}_{DG(K_{h}^{l})}.  \]

\end{proof}

\section{Numerical Tests}
In this section we present some numerical tests which verify the reliability and efficiency of the a posteriori estimator given in Theorem \ref{thm:Reliability}. In addition, we look at the benefits of using adaptive refinement for PDEs posed on surfaces and present our own adaptive strategy based on the geometric residual of the estimator. 

\subsection{Implementation Aspects}
All tests are performed using DUNE-FEM, a discretization module based on the Distributed and Unified Numerics Environment (DUNE), (further information about DUNE can be found in \cite{dunegridpaperI:08}, \cite{dunegridpaperII:08} and \cite{dune-web-page}). In all our numerical tests we choose the polynomial order on each element $K_{h} \in \mathcal{T}_{h}$ to be $1$, the penalty parameters to be equal to $10$ and the constant $C$ appearing in the estimator given in Theorem \ref{thm:Reliability} to be equal to $1$. The initial mesh generation for each test case is performed using the 3D surface mesh generation module of the Computational Geometry Algorithms Library (CGAL) (see \cite{rineau20093d}). 

It is worth mentioning that for both test problems discussed below, the lifted point $\xi(x)$ cannot be computed exactly and thus has to be approximated. Details of the algorithm used to do so and further implementational aspects regarding the numerical scheme and the estimator can be found in \cite{demlow2008adaptive} and \cite{dedner2012analysis}.

\subsection{Test Problem on Dziuk Surface}
The first test problem will consider (\ref{eq:EllipticGamma}) on the Dziuk surface, given by $\Gamma = \{ x \in \mathbb{R}^{3}\ : \ (x_{1}-x_{3}^{2})^{2}+x_{2}^{2}+x_{3}^{2}= 1. \}$. As a test solution, we took the function
\[ u(x,y,z) = e^{\frac{1}{1.85 - (x-0.2)^{2}}}\sin y \]
which has sharp gradient changes, as shown in Figure \ref{fig:UniformDziuk1}(a). In Figure \ref{fig:UniformDziuk2}(a) we plot each of the contributions of our error estimator against the number of degrees of freedom when performing global refinement for the Dziuk surface. Note that we plot the \emph{standard residual} with its geometric scaling term i.e.$\left(\sum_{K_{h} \in \mathcal{T}_{h}}\norm{\mathbf{A}_{h}}_{l^{2},L^{\infty}(w_{K_{h}})} \eta_{K_{h}}^2\right)^{1/2}$. Notice how both the geometric residual $\left(\sum_{K_{h} \in \mathcal{T}_{h}}\mathcal{G}_{K_{h}}^2\right)^{1/2}$ and the $DG$ geometric residual $\left(\sum_{K_{h} \in \mathcal{T}_{h}}\mathcal{G}_{DG_{K_{h}}}^2\right)^{1/2}$ converge with higher order as suggested by Lemma \ref{Gamma2GammahSmall}. Figure \ref{fig:UniformDziuk2}(b) confirms that our estimator is efficient, with an efficiency index of about $5.6$.    
\begin{figure}[htp]
\centering
\subfloat[]{\includegraphics[width=0.4 \textwidth]{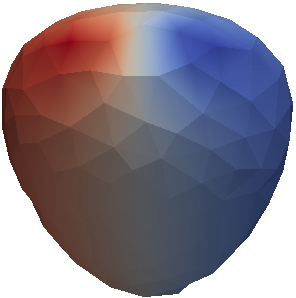}}
\hspace{1cm}
\subfloat[]{\includegraphics[width=0.4 \textwidth]{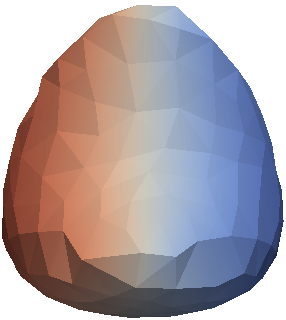}}\\
\caption{Front and rear view of the initial mesh for the Dziuk surface.}
\label{fig:UniformDziuk1}
\subfloat[]{\includegraphics[width=0.50 \textwidth]{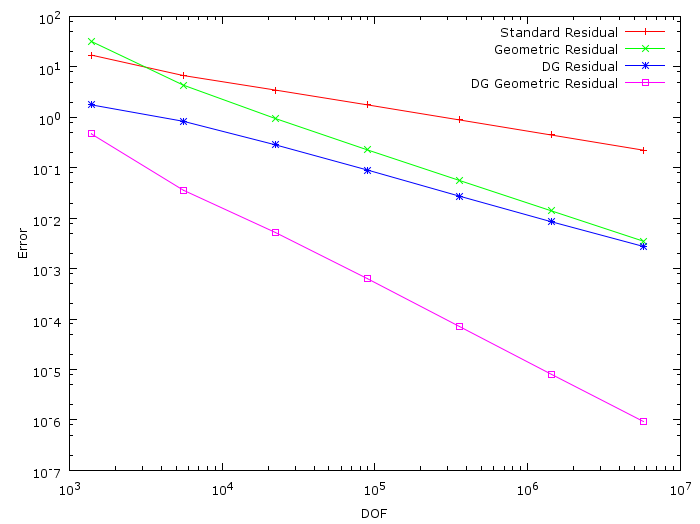}}
\subfloat[]{\includegraphics[width=0.50 \textwidth]{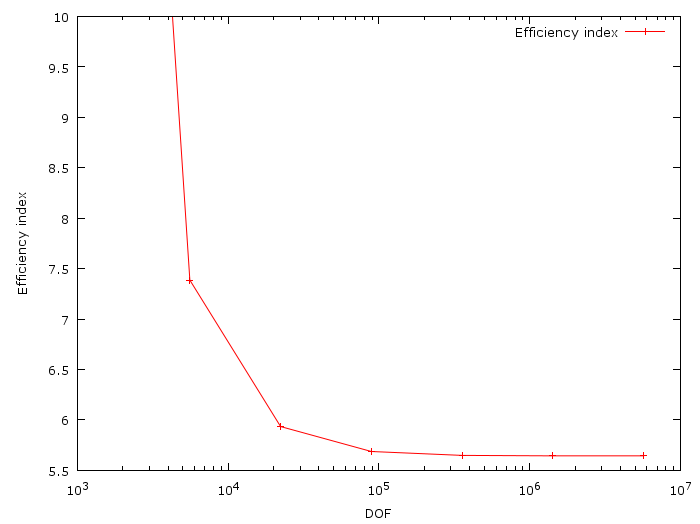}}
\caption{Residual components (left) and efficiency index (right) for the Dziuk surface.}
\label{fig:UniformDziuk2}
\end{figure}

Despite the geometric residual being asymptotically of higher order, it is often the case that initial grids poorly resolve areas of high curvature. This is in fact the case with our initial mesh of the Dziuk surface as can be seen in Figure \ref{fig:UniformDziuk1}(b). Hence, in practice, the geometric residual can be very large for coarser meshes and even remain dominant after multiple global refinements. What we now aim to show is that adaptive refinement strategies based on our estimator are not only useful for problems with sharp changes in the solution, but are also a way of rapidly decreasing the geometric residual for meshes with poorly resolved high curvature areas compared to global refinement. 

Figure \ref{fig:AdaptiveDziuk}(a) shows the plots of the estimator and the true error when performing global and adaptive refinement against the number of degrees of freedom for the Dziuk surface. The adaptive refinement strategy used here is the so-called fixed fraction strategy, detailed for example in Section 3.2 in \cite{rannacher1999posteriori}, with rate $\theta = 0.3$. Notice how the estimator and the true error decrease at a faster rate for coarser meshes when using adaptive refinement, which is due to it rapidly reducing the initially dominant geometric residual. In addition, our estimator appears to attain a given error with approximately a third of the number of degrees of freedom required by global refinement. Figure \ref{fig:AdaptiveDziuk}(b) shows an adaptively refined mesh for the Dziuk surface colour coded by element size. Notice how our estimator captures both the region with exponential peaks (right) and the regions with high curvature (left).     

\begin{figure}[htp]
\centering
\subfloat[]{\includegraphics[width=0.52 \textwidth]{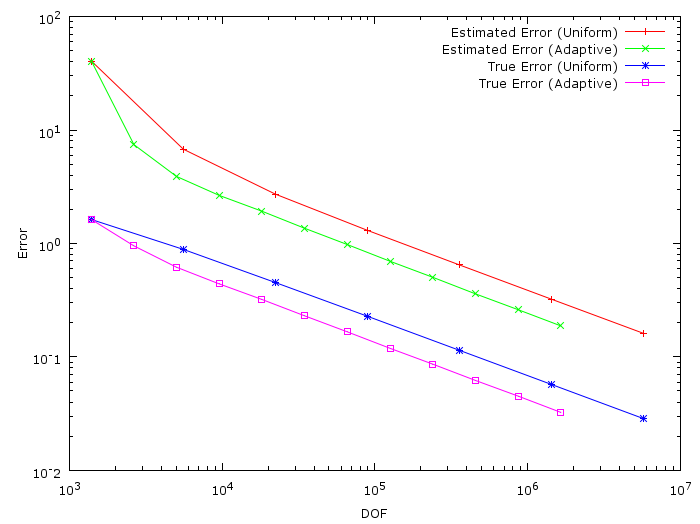}}
\hspace{1cm}
\subfloat[]{\includegraphics[width=0.4 \textwidth]{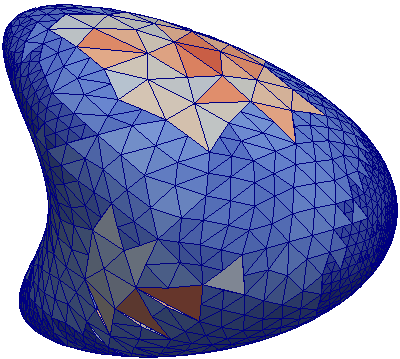}}
\caption{Estimated/true errors for uniform and adaptive refinement (left) and an adaptively refined \\ mesh (right) for the Dziuk surface colour coded by element size.}
\label{fig:AdaptiveDziuk}
\end{figure}           

\subsection{Test Problem on Enzensberger-Stern Surface}
Our second test problem, taken from \cite{dedner2012analysis}, considers (\ref{eq:EllipticGamma}) on the Enzensberger-Stern surface given by  $\Gamma = \{ x \in \mathbb{R}^{3}\ : \ 400(x^2y^2 + y^2z^2 + x^2z^2) - (1-x^2-y^2-z^2)^3 - c = 0 \}$ where $c = 40$ and whose exact solution is chosen to be given by $u(x)=x_{1}x_{2}$. This is a more extreme example of a surface with high curvature areas whose initial mesh poorly resolves them, as shown in Figure \ref{fig:UniformES}(a). In fact, it is worth noting that as $c \rightarrow 0$ the width $\delta_{U}$ of the open subset $U$ required for the one-to-one property of (\ref{eq:uniquePoint}) to hold locally tends to zero. In Figure \ref{fig:UniformES}(b) we plot each of the contributions of our error estimator against the number of degrees of freedom when performing global refinement for the Enzensberger-Stern surface. Notice how the geometric residual term remains the dominant source of error all the way through our computations despite converging with higher order.   
 
\begin{figure}[htp]
\centering
\subfloat[]{\includegraphics[width=0.4 \textwidth]{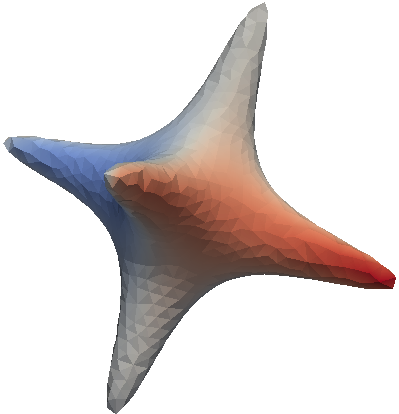}}
\subfloat[]{\includegraphics[width=0.55 \textwidth]{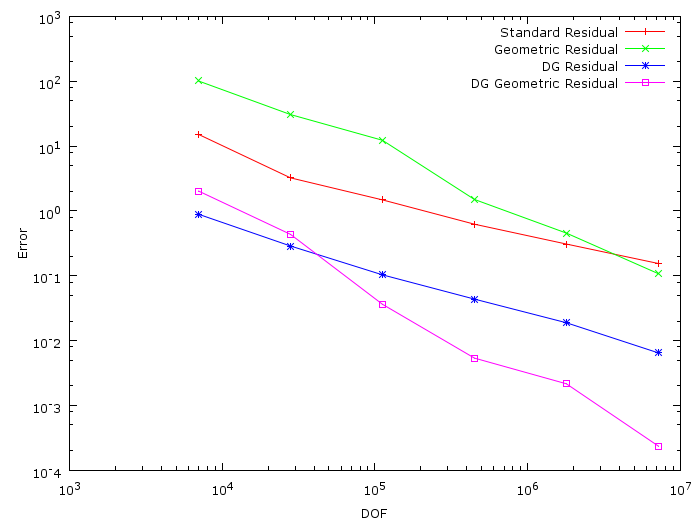}}
\caption{Initial mesh (left) and residual components (right) for the Enzensberger-Stern surface.}
\label{fig:UniformES}
\end{figure}

Figure \ref{fig:AdaptiveES}(a) shows the plots of the estimator and the true error when performing global and adaptive refinement against the number of degrees of freedom for the Enzensberger-Stern surface. The estimator decreases at a much faster rate for coarser meshes when using adaptive refinement by rapidly reducing the geometric residual. 

Figure \ref{fig:AdaptiveES}(b) shows the efficiency of the estimator when performing respectively uniform and adaptive refinement, the latter converging significantly faster to an efficiency index of $5.9$. Figure \ref{fig:AdaptiveES}(c) shows an adaptively refined mesh for the Enzensberger-Stern surface colour coded by element size. Again, our estimator manages to capture the regions of high curvature which were the cause of the dominant geometric residual occuring for global refinement.

We also consider an adaptive refinement strategy based on the geometric residual, as numerics have suggested that it is the dominant contribution for grids that poorly resolve the underlying surface. This strategy only computes the $DG$ approximation $u_{h}$ if the geometric residual statisfies
\[  \frac{\left(\sum_{K_{h} \in \mathcal{T}_{h}}\mathcal{G}_{K_{h}}^2\right)^{1/2}}{\left( \sum_{K_{h} \in \mathcal{T}_{h}}\mathcal{R}_{K_{h}}^{2} + \mathcal{R}_{DG_{K_{h}}}^{2} + \mathcal{G}_{K_{h}}^{2} + \mathcal{G}_{DG_{K_{h}}}^{2} \right)^{1/2} } \leq tol_{\mbox{geometric}} \]
where  $tol_{\mbox{geometric}} \in (0,1)$ is some user-defined tolerance which prescribes how small the geometric residual should be relative to the full estimator. Otherwise, we recompute the estimator and adaptively refine the grid until the criteria is satisfied.  In Figures \ref{fig:AdaptiveES}(a) and \ref{fig:AdaptiveES}(b) we also show respectively the plots of the estimator/true error and the efficiency index when performing our geometric adaptive refinement strategy. Highlighted are the iterations at which the $DG$ approximation is recomputed; the true error is only plotted for those iterations. Our estimator reaches a similar error as the standard adaptive strategy as we increase the number of degrees of freedom but requires far less recomputations of the $DG$ approximation ($11$ for the standard adaptive strategy compared to $5$ for the geometric adaptive strategy), hence significantly more computationally efficient. It is also worth mentioning that although we do not have a rigorous proof that the stopping criteria for our geometric adaptive refinement strategy would be satisfied, it appears that this is in fact the case for all of our test problems, with the number of iterations required to satisfy the stopping criteria decreasing as expected.      
Note also that after a number of refinement steps the curves for both
refinement strategies seem to collapse but that we are in fact reaching the
same error with slightly fewer elements in addition to requiring fewer
computations of $u_h$.

\begin{figure}[htp]
\centering
\subfloat[]{\includegraphics[width=0.45 \textwidth]{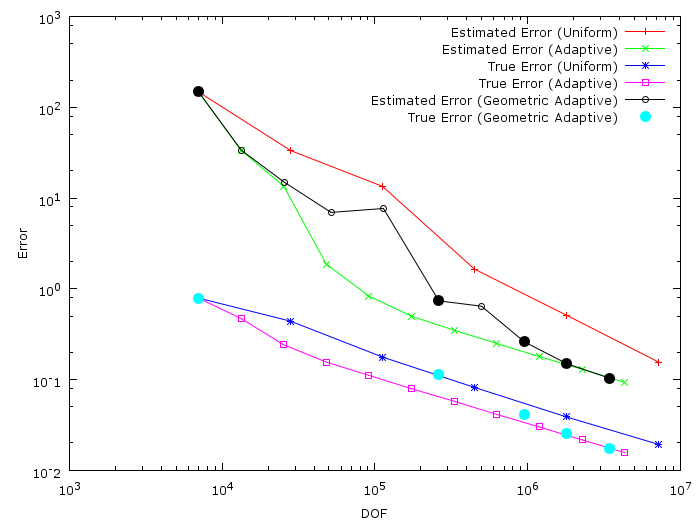}}
\hspace{1cm}
\subfloat[]{\includegraphics[width=0.45 \textwidth]{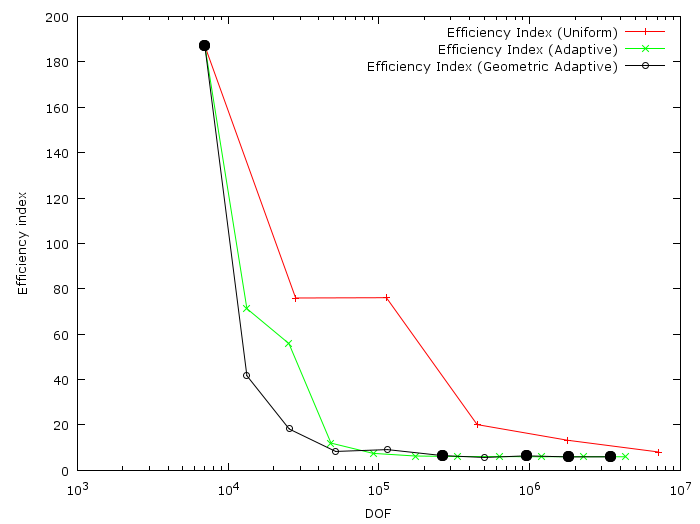}} \\
\subfloat[]{\includegraphics[width=0.95 \textwidth]{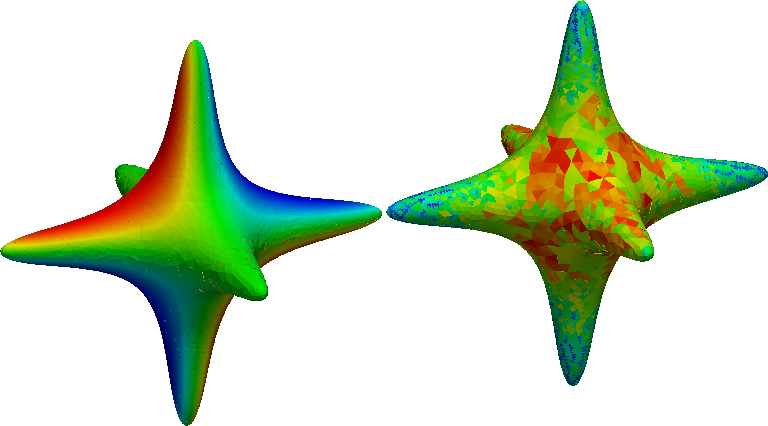}} \\
\caption{Estimated/true errors (top right) and efficiency indices (top left) for uniform and adaptive \\ refinement.
Results for both standard and geometric adaptation strategies are shown.
The solution \\ and a color coding of the adaptive mesh are shown in the bottom row.
}
\label{fig:AdaptiveES}
\end{figure}

\section{Conclusions}
In this paper, we derived a dual weighted residual-based a posteriori error estimate for a surface DG discretisation of a model second-order elliptic problem posed on a smooth surface in $\mathbb{R}^{3}$. We proved both reliability and efficiency of the error estimator in the energy norm and showed that the error may be split into a ``residual part'', made up of the standard resdidual term along with the jump of the DG approximation, and a higher order ``geometric part'' which arises from the lack of Galerkin orthogonality. These were then verified numerically for a number of test problems and, in the process, we showed the benefits of using adaptive refinement for problems on surfaces with poorly resolved regions of high curvature. We then described and tested an adaptive refinement strategy which was based on the ``geometric part'' of the residual and showed that we may obtain similar errors to the standard adaptive refinement strategy for a fraction of the computational cost. We have recently derived higher-order a priori error estimates for a large class of surface DG methods in \cite{antonietti2013analysis} and will naturally be looking at extending our estimator to encorporate both anisotropic DG space order and surface approximation order in the future.      

\section*{Acknowledgements}
We would like to thank Dr. Alan Demlow and Dr. Bj\"{o}rn Stinner for their useful comments and suggestions. This research has been supported by the British Engineering and Physical Sciences Research Council (EPSRC), Grant EP/H023364/1.

\bibliographystyle{IMANUM-BIB}
\bibliography{IMANUM-refs}

\end{document}